\theoremstyle{plain}
\newtheorem{thm}{Theorem}[section]
\newtheorem{lem}[thm]{Lemma}
\newtheorem{cor}[thm]{Corollary}
\newtheorem{prop}[thm]{Proposition}
\theoremstyle{definition}
\theoremstyle{remark}
\newcommand{\script}{\mathcal}
\newcommand{\bbf}{\mathbb}
\newcommand{\bksl}{\setminus}
\newcommand{\vepsilon}{\epsilon}
\DeclareMathOperator{\stab}{stab}
\DeclareMathOperator{\inj}{\inj}
\DeclareMathOperator{\Map}{Map}
\DeclareMathOperator{\Aut}{Aut}
\DeclareMathOperator{\Out}{Out}
\DeclareMathOperator{\Inn}{Inn}
\newcommand{\ZZ}{\mathbb{Z}}
\newcommand{\HH}{\mathbb{H}}
\newcommand{\ray}{\mathcal{R}}
\newcommand{\pph}{\mathcal{P}}
\newcommand{\qq}{\mathbf{q}}
\newcommand{\gs}{\mathbf{g}}
\begin{document}

\title[The space of geodesic rays]
{The action of the mapping class group
on the space of geodesic rays of a punctured hyperbolic surface}

\author{Brian H. Bowditch}
\address{Mathematics Institute\\
University of Warwick\\
Coventry CV4 7AL, Great Britain}

\author{Makoto Sakuma}
\address{Department of Mathematics\\
Graduate School of Science\\
Hiroshima University\\
Higashi-Hiroshima, 739-8526, Japan}
\email{sakuma@math.sci.hiroshima-u.ac.jp}

\date{9th August 2016}

\subjclass[2010]{20F65, 20F34, 20F67, 30F60, 57M50 \\
\indent {
The second author was supported by JSPS Grants-in-Aid 15H03620.}}

\begin{abstract}
Let $\Sigma$ be a complete finite-area orientable hyperbolic surface 
with one cusp,
and let $\ray$ be the space of complete geodesic rays 
in $\Sigma$ emanating from the puncture. 
Then there is a natural action of the mapping class group of $\Sigma$ 
on $\ray$.
We show that this action is \lq\lq almost everywhere'' wandering.
\end{abstract}

\maketitle

\section{Introduction}\label{SA}
Let $\Sigma$ be a complete finite-area orientable hyperbolic surface 
with one cusp,
and $\ray$ the space of complete geodesic rays 
in $\Sigma$ emanating from the puncture. 
Then, there is a natural action of the (full) mapping class group
$\Map(\Sigma)$ of $\Sigma$ on $\ray \equiv S^1$ (see Section \ref{SB}).
The dynamics of the action of an element of $\ray$
plays a key role in the Nielsen-Thurston theory for surface homeomorphisms.
It also plays a crucial role in the variation of McShane's identity
for punctured surface bundles with pseudo-Anosov monodromy, 
established by \cite{Bo1} and \cite{AkMS}.

It is natural to ask what does the action of the 
whole group $\Map(\Sigma)$ (or its subgroups) look like.
However, the authors could not find a reference
which treats this natural question,
though there are various references
which study the action of (subgroups of) the mapping class groups
on the projective measured lamination spaces,
which are homeomorphic to higher dimensional spheres
(see for example, \cite{Mas1, Mas2, MccP, OS}).
In particular, such an action is minimal (cf. \cite{FatLP})
and moreover ergodic \cite{Mas1}.

The purpose of this paper is to prove that
the action of $\Map(\Sigma)$ on $\ray$ 
is \lq\lq  almost everywhere'' wandering
(see Theorem \ref{B1} for the precise meaning).
This forms a sharp contrast to the above result of \cite{Mas1}.

We would like to thank Katsuhiko Matsuzaki 
for his helpful comments on the first version of the paper.

\section{Actions}\label{SB}
Let $ \Sigma=\HH^2/\Gamma $ be a complete finite-area orientable hyperbolic surface with precisely one cusp, where $ \Gamma=\pi_1(\Sigma) $.
Let $ \ray $ be the space of complete geodesic rays 
in $ \Sigma $ emanating from the puncture. 
Then $\ray $ is identified with
a horocycle, $\tau$, in the cusp.
In fact, a point of $ \tau $ determines a geodesic ray in $ \Sigma $
emanating from the puncture,
or more precisely, a bi-infinite geodesic path with 
its positive end
going out the cusp and meeting $ \tau $ in the given point.
Any mapping class $\psi$ of $ \Sigma $ maps each geodesic ray to another path which
can be \lq\lq straightened out'' to another geodesic ray, 
and hence determines another
point of $ \tau $.
This gives an action of the infinite cyclic group 
generated by $ \psi $ on $ \ray \equiv \tau $.

A rigorous construction of this action is described as follows.
Choose a representative, $ f $, of $ \psi $,
so that its lift $ \tilde f $ to the universal cover $ \HH^2 $
is a quasi-isometry.
Then $ \tilde f $ extends to a self-homeomorphism 
of the closed disc $ \HH^2\cup \partial\HH^2 $.
For a geodesic ray $ \nu \in \ray $,
let $ \tilde \nu $ be the closure in $ \HH^2\cup \partial\HH^2 $
of a lift of $ \nu $ to $ \HH^2 $.
Then $\tilde f (\tilde\nu) $ is an arc 
properly embedded in $ \HH^2\cup \partial\HH^2 $,
and its endpoints determine a geodesic in $ \HH^2 $,
which project to another geodesic ray $ \nu' \in \ray$.
Thus, we obtain an action of $ \psi $ on $ \ray $,
by setting $\psi\nu=\nu' $.
The dynamics of this action plays a key role in \cite{AkMS}.
However, one needs to verify that this action does not depend on
the choice of a representative $ f $ of $ \psi $.

In the following, we settle this issue, by using 
the canonical boundary of
a relatively hyperbolic group described in \cite{Bo2}.
Though we are really interested here only in the case where
the group is the fundamental group of a once-punctured closed orientable surface, and the the peripheral structure is interpreted in the usual way 
(as the conjugacy class of the fundamental group of
a neighborhood of the puncture),
we give a discussion in a general setting.

Let $ \Gamma $ be a non-elementary relatively hyperbolic group with a
given peripheral structure $ \pph $, 
which is a conjugacy invariant collection of 
infinite subgroups of $ \Gamma $.
By \cite[Definition 1]{Bo2},
$ \Gamma $ admits a properly discontinuous isometric action 
on a path-metric space, $ X $, with the following properties.
\begin{enumerate}
\item
$ X $ is proper (i.e., complete and locally compact) and Gromov hyperbolic, 
\item
every point of the boundary of $ X $ is either a conical limit point or a bounded parabolic point,
\item
the peripheral subgroups, i.e.,
the elements of $ \pph $,
are precisely 
the maximal parabolic subgroups of $ \Gamma $, and
\item
every peripheral subgroup is finitely generated.
\end{enumerate}
It is proved in \cite[Theorem 9.4]{Bo2} that
the Gromov boundary $ \partial X $ is uniquely determined
by $ (\Gamma, \pph) $,
(even though the quasi-isometry class of the space $ X $
satisfying the above conditions is not uniquely determined).
Thus the boundary 
$ \partial \Gamma = \partial(\Gamma,\pph)$ 
is defined to be $ \partial X $.
By identifying $ \Gamma $ with an orbit in $ X $,
we obtain a natural topology on
the disjoint union $ \Gamma \cup \partial \Gamma $ 
which is compact Hausdorff, with
$ \Gamma $ discrete and $ \partial \Gamma $ closed.

The action of $ \Gamma $ on itself by left multiplication extends
to an action on $ \Gamma \cup \partial \Gamma $ by homeomorphism.
This gives us a geometrically finite convergence action of
$ \Gamma $ on $ \partial \Gamma $.
Let $ \Aut(\Gamma,\pph) $ be the subgroup of 
the automorphism group, $ \Aut(\Gamma) $, of $ \Gamma $
which respects the peripheral structure $ \pph $.
This contains the inner automorphism group, $ \Inn(\Gamma) $.
Now, by the naturality of $ \partial\Gamma $
(\cite[Theorem 9.4]{Bo2}),
the action of $ \Aut(\Gamma,\pph) $ on $ \Gamma $ also extends to
an action on 
$ \Gamma \cup \partial \Gamma $,
which is  $ \Gamma $-equivariant,
i.e., $ \phi\cdot(g\cdot x) = \phi(g)\cdot(\phi\cdot x) $
for every $ \phi \in \Aut(\Gamma,\pph) $, $ g \in \Gamma $ and 
$ x \in \Gamma \cup \partial \Gamma $.
(In order to avoid confusion, we use $\cdot$ to denote group actions, 
only in this place.)
Under the natural epimorphism $ \Gamma \longrightarrow \Inn(\Gamma) $,
this gives rise to the same action 
on $\partial\Gamma$
as that induced by left multiplication.
The centre of $ \Gamma $ is always finite, and for simplicity, we
assume it to be trivial.
In this case, we can identify $ \Gamma $ with $ \Inn(\Gamma) $.

Suppose that $ p \in \partial \Gamma $ is a parabolic point.
Its stabiliser, $ Z = Z(\Gamma,p) $, in $ \Gamma $ is a peripheral subgroup.
Now $ Z $ acts properly discontinuously cocompactly on
$ \partial \Gamma \bksl \{ p \} $, so the quotient
$ T = (\partial \Gamma \bksl \{ p \})/Z $ is compact Hausdorff
(cf. \cite[Section 6]{Bo2}).
Let $ A=A(\Gamma,\pph,p) $ be the stabiliser of 
$ p $ in $ \Aut(\Gamma,\pph) $.
Then $ Z $ is a normal subgroup of $ A $, 
and we get an action of $ M = A/Z $ on $ T $.
If there is only one conjugacy class of peripheral subgroups, then the
orbit $ \Gamma p $ is $ \Aut(\Gamma,\pph) $-invariant, and it follows that
the group $ A $ maps 
isomorphically onto
$ \Out(\Gamma,\pph) =\Aut(\Gamma,\pph)/\Inn(\Gamma) $, 
so in this case we
can naturally identify the group $ M $ with $ \Out(\Gamma,\pph) $.

Suppose now that $ \Sigma $ is a once-punctured closed orientable surface,
with negative Euler characteristic $ \chi(\Sigma) $.
We write $ \Sigma = D/\Gamma $, where $ D = {\tilde \Sigma} $, 
the universal cover, and
$ \Gamma \cong \pi_1(\Sigma) $.
Let $ \pph $ be the peripheral structure of $ \Gamma $ arising 
from the cusp of $ \Sigma $,
namely $ \pph $ consists of the conjugacy class of the fundamental group of 
a neighbourhood of the end of $ \Sigma $.
Then $ (\Gamma,\pph) $ is a relatively hyperbolic group,
because if we fix a complete hyperbolic structure on $ \Sigma $
then $ D $ is identified with $ \HH^2 $ and the isometric action of 
$ \Gamma $ on $ D=\HH^2 $ satisfies the conditions (1)--(4) in the above,
namely \cite[Definition 1]{Bo2}.
Now $ D $ admits a natural compactification to a closed disc, $ D \cup C $,
where $ C $ is the dynamically defined circle at infinity.
We can identify $ C $ with $ \partial \Gamma $.
In fact, if $ x $ is any point of $ D $, then identifying $ \Gamma $ with
the orbit $ \Gamma x $, we get an identification of
$ \Gamma \cup \partial \Gamma $ with $ \Gamma x \cup C \subseteq
D \cup C $.
As above we get an action of $ \Aut(\Gamma,\pph) $ on $ C $.
If $ p \in \partial C $ is parabolic, 
then its stabiliser $ Z $ in $ \Gamma $ is
isomorphic to 
the infinite cyclic group
$ \bbf Z $, and we get an action of 
$ \Out(\Gamma,\pph) $
on the circle $ T = (C \bksl \{ p \})/Z $.
Since $ \Out(\Gamma,\pph) $ is identified with 
the (full) mapping class group, $ \Map(\Sigma) $, of $ \Sigma $,
we obtain a well defined action
of $ \Map(\Sigma) $ on the circle $ T $.

We now return to the setting in the beginning of this section,
where $ \Sigma=\HH^2/\Gamma $ is endowed with a complete hyperbolic structure.
Then we can identify the (dynamically defined) circle $T$ with 
the horocycle, $ \tau $, in the cusp,
which in turn is identified with the space of geodesic rays, $ \ray $.
This gives an action of $ \Map(\Sigma) $ on $ \ray $.
Since the action of $ \Gamma $ on $ \HH^2 $
satisfies the conditions (1)-(4) in the above
(i.e., \cite[Definition 1]{Bo2}),
we see that, 
for each mapping class $ \psi $ of $ \Sigma $,
its action on $ \ray $, defined via the \lq\lq straightening process''
presented at the beginning of this section,
is identical with the action which is dynamically constructed in the above,
independently from the hyperbolic structure.
Thus the problem raised at the beginning of this section is settled.

\bigskip

In order to state the main result, 
we prepare some terminology.
Let $ G $ be a group acting by homeomorphism on a topological space $ X $.
An open subset, $ U \subseteq X $, is said to be 
\emph{wandering}
if $ gU \cap U = \emptyset $ for all $ g \in G \bksl \{ 1 \} $.
(Note that this definition is stronger than the usual definition
of wandering, where it is only assumed that
the number of $g\in G$ such that $ gU \cap U \ne \emptyset $ is finite.)
The \emph{wandering domain}, $ W_G(X) \subseteq X $ is the union of
all wandering open sets.
Its complement, $ W^c_G(X) = X \bksl W_G(X) $, is the \emph{non-wandering set}.
This is a closed $ G $-invariant subset of $ X $.
Note that if $ Y \subseteq X $ is a $G$-invariant open set, 
then $ W_G(Y) = W_G(X) \cap Y $.
If $ H \triangleleft G $ is a normal subgroup, we get an induced action
of $ G/H $ on $ X/H $.
(In practice, the action of $ H $ on $ X $ will be properly discontinuous.)
One checks easily that 
$ W_G(X)/H  \subseteq  W_{G/H}(X/H) $
with equality if $ W_H(X) = X $.

Note that any hyperbolic structure on $ \Sigma $ induces a
euclidian metric on $ T $ (via the horocycle $ \tau $).
If one changes the hyperbolic metric, the induced euclidian metrics
on $ T $ are related by a quasisymmetry.
However, they are completely singular with respect to each other
(see \cite{Ku, Tu2}).
(That is, there is a set which has zero measure in one structure,
but full measure in the other.)
In general, this gives little control over how the Hausdorff dimension
of a subset can change.

We say that a subset, $ B \subseteq T $ is \emph{small} if it has
Hausdorff dimension stricty less than 1 with respect to any hyperbolic
structure on $ \Sigma $.
Now we can state our main theorem.

\begin{thm}\label{B1}
Let $ \Sigma $  be
a once-punctured closed orientable surface,
with $ \chi(\Sigma)<0 $,
and consider the action of $ \Map(\Sigma) $ on the circle $ T $,
defined in the above.
Then the non-wandering set in $ T $ 
with respect to the action of $ \Map(\Sigma) $
is small.
\end{thm}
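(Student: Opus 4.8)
The plan is to translate the wandering condition into coarse geometry in $\HH^2$, exhibit the non-wandering rays explicitly, and bound the Hausdorff dimension of all of them by the critical exponent of a proper subsurface group, which is strictly less than $1$ for a fixed hyperbolic structure. Fix a complete hyperbolic structure on $\Sigma$ and write $\Sigma=\HH^2/\Gamma$ with the cusp at $\infty$ and $Z=\gen{z\mapsto z+1}$, so that $T$ is identified with the horocycle $\tau$, a point of $T$ is a geodesic ray $\nu$ issuing from the cusp, and $\Map(\Sigma)$ acts as in Section~\ref{SB}. Two rays from the cusp are close in $T$ exactly when their lifts fellow-travel over a long initial segment, and a mapping class is represented by a quasi-isometry of $\HH^2$ fixing $\infty$; unwinding the definition of the wandering domain, $\nu$ is non-wandering precisely when, for arbitrarily large $L$, some $\psi\in\Map(\Sigma)\bksl\{1\}$ carries the initial segment $\nu|_{[0,L]}$ (a path in $\Sigma$ from the cusp) into a uniformly bounded neighbourhood of itself. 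Many rays satisfy this: if the forward tail of $\nu$ stays on the cusp side of an essential simple closed curve $c$, then $\nu$ is fixed by the Dehn twist along $c$; so the set of rays whose forward tail lies in a proper essential subsurface is contained in the non-wandering set. For such a subsurface $S$ (containing the cusp), those rays have endpoints in the $\Gamma$-orbit of the limit set $\Lambda(\pi_1 S)\subseteq\partial\HH^2$, whose Hausdorff dimension is the critical exponent $\delta(\pi_1 S)$; since $\pi_1 S$ is an infinite-index geometrically finite Fuchsian group, and --- for a \emph{fixed} structure --- the systole is bounded below, one checks that $\delta_0:=\sup_S\delta(\pi_1 S)<1$, the supremum being over the countably many isotopy classes. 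Hence $\dim_H W^c_{\Map(\Sigma)}(T)\ge\delta_0$, and the theorem reduces to the reverse inequality $\dim_H W^c\le\delta_0<1$.

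For the reverse inequality the key will be a \emph{properness / rigidity} statement, which I expect to be the main obstacle: if $\psi\in\Map(\Sigma)\bksl\{1\}$ carries $\nu|_{[0,L]}$ into a $D$-neighbourhood of itself, then $\psi$ fixes the isotopy class of every essential simple closed curve of length $\lesssim L/D$, because for such a curve the crossing pattern with $\nu|_{[0,L]}$ already determines its isotopy class; consequently, when the ratio $L/D$ is large, $\psi$ fixes too many curves to be non-trivial \emph{unless} $\nu|_{[0,L]}$ fails to fill $\Sigma$, i.e.\ lies in a proper essential subsurface. The quantitative heart of this --- controlling the distortion $D$ in terms of the complexity of $\psi$, so that $L/D$ can indeed be taken large along the sequences arising from the non-wandering condition --- is where the relatively hyperbolic structure on $\Gamma$ and the naturality of $\partial\Gamma$ recorded in Section~\ref{SB} enter. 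Granting it, a non-wandering $\nu$ must, for each $k$, have $\nu|_{[0,k]}$ contained in some proper essential subsurface; so $W^c$ is covered, at scale $k$, by the initial segments of length $k$ of geodesics confined to a proper subsurface --- of which there are $O(e^{\delta_0 k})$, each confining a ray to an interval of $T$ of length $\asymp e^{-k}$. Thus $W^c$ lies in a $\limsup$ of unions of $e^{\delta_0 k(1+o(1))}$ intervals of length $e^{-k(1+o(1))}$, whence $\dim_H W^c\le\delta_0$. A few remaining configurations --- rays running out the cusp, and cusp-separatrix directions of pseudo-Anosov mapping classes, which are filling --- must be absorbed into the covering; here the Birman--Series sparseness of simple-type geodesics keeps the relevant count sub-exponential, so these contribute dimension $0$.

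Finally, all the constants above depend on the chosen hyperbolic structure, but $W^c_{\Map(\Sigma)}(T)$ is defined purely from the $\Map(\Sigma)$-action, hence is the same subset of $T$ for every structure, while $\delta_0<1$ for each; so $W^c$ is small, which proves the theorem. To summarise: the argument turns on the quantitative rigidity statement of the second paragraph --- that no non-trivial mapping class coarsely preserves an arbitrarily long \emph{filling} initial segment of a ray --- which both pins down the non-wandering set and furnishes the exponent $\delta_0$; the other ingredients (the twist-invariant rays, the bound $\delta_0<1$, the limsup covering, and the Birman--Series absorption of the exceptional configurations) are comparatively routine.
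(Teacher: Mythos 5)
Your overall architecture resembles the paper's: show the non-wandering set is contained in the union of (a) endpoints of rays confined to proper subsurfaces, i.e.\ limit sets of subsurface groups of uniformly bounded dimension $<1$, and (b) an exceptional set of ``simple type'' handled by Birman--Series; then note that this containment is structure-independent while the dimension bound holds for each hyperbolic structure. But the step you yourself flag as the ``quantitative heart'' --- that no non-trivial $\psi$ can carry a long filling initial segment $\nu|_{[0,L]}$ into a bounded neighbourhood of itself --- is a genuine gap, and as formulated it cannot be closed by a uniform coarse estimate. The non-wandering condition supplies, for each neighbourhood $U$ of $x$, only \emph{some} $\psi_U\neq 1$ with $\psi_U U\cap U\neq\emptyset$; the distortion constant $D=D(\psi_U)$ of a representative quasi-isometry is unbounded as $\psi_U$ ranges over $\Map(\Sigma)$, so there is no a priori reason that $L/D\to\infty$ as $U$ shrinks, and the deduction that $\psi_U$ fixes all curves of length $\lesssim L/D$ never gets started. (The subsidiary claim that the crossing pattern with $\nu|_{[0,L]}$ determines a curve's isotopy class also needs an argument.) The paper circumvents this entirely by replacing the coarse statement with an exact combinatorial one: the loop-cutting construction assigns to each $x$ a derived sequence of proper arcs; two points in a small enough neighbourhood share the same initial derived sequence \emph{exactly} (Corollary \ref{C3}); the construction is equivariant under $\Aut(\Gamma,\pph)$ (Lemma \ref{C4}); hence $\psi U\cap U\neq\emptyset$ forces $\psi$ to fix the proper homotopy classes of $n$ arcs, and a mapping class fixing a filling arc system is trivial (Lemma \ref{E2}). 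No quasi-isometry constants appear anywhere.

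Two smaller points. First, $\delta_0=\sup_S\delta(\pi_1 S)<1$ does not follow from a systole bound or from each individual $\delta(\pi_1 S)<1$ (Tukia): one needs a bound that is \emph{uniform} over the infinitely many subsurface groups, which the paper obtains from Falk--Matsuzaki via uniform geometric tightness of the groups $G(\gamma)$ (Lemma \ref{D4}); correspondingly, your single box-counting over all subsurfaces at once is better replaced by countable stability of Hausdorff dimension together with this uniform bound. Second, your ``exceptional configurations'' (including the fixed separatrices of pseudo-Anosov classes) are precisely accounted for in the paper by the set $R$ of points $x$ for which some ray $\lambda(q,x)$ is simple; this set has Hausdorff dimension $0$ by Theorem \ref{C1}, so your instinct to absorb them via Birman--Series is sound, but it needs to be said which set is being covered and why every non-wandering point outside the subsurface limit sets lands in it --- which is exactly what Lemma \ref{D5} does.
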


In particular, the non-wandering set has measure 0 with respect
to any hyperbolic structure, and so has empty interior.

Given that two different hyperbolic structures give rise to
quasisymmetically related metrics on $ T $, it is natural
to ask if there is a more natural way to express this.
For example, is there a property of (closed) subsets of $ T $,
invariant under quasisymmetry and satisfied by the non-wandering set,
which implies Hausdorff dimension less than 1 
(or measure 0)?

\section{The loop-cutting construction}\label{SC}

Let $ \Sigma=\HH^2/\Gamma $ be a complete finite-area orientable hyperbolic surface with precisely one cusp, where $ \Gamma=\pi_1(\Sigma) $.
Thus the universal cover $ D = {\tilde \Sigma} $ is identified with 
the hyperbolic plane $ \HH^2 $.
Write $ C $ for the ideal boundary of $ D $, which we consider equipped
with a preferred orientation.
Thus $ \Gamma $ acts on $ C $ as a geometrically finite convergence group.
Let $ \Pi \subseteq C $ be the set of parabolic points of $ \Gamma $.
Given $ p \in \Pi $, let $ \theta(p) $ be the generator of
$ \stab_\Gamma(p) $ which 
acts on $ C\bksl \{p\} $ as a translation in the positive direction.
Given distinct $ x,y \in C $, let $ [x,y] \subseteq D \cup C $ denote 
the oriented geodesic from $ x $ to $ y $.
If $ g \in \Gamma $ is hyperbolic,
write $ a(g) $, $ b(g) $ respectively,
for its attracting and repelling fixed points;
$ \alpha(g) = [b(g),a(g)] $
for its axis;
and $\lambda(g)$ for the oriented closed geodesic in $\Sigma$
corresponding to $g$, 
i.e., the image of $\alpha(g) \cap D$ in $\Sigma$.
If $ x,y \in C $ are distinct, then $ [x,y] \cap D $ projects to 
an oriented bi-infinite geodesic path, 
$ \lambda(x,y) $, in $ \Sigma $.
If $ x,y \in \Pi $, then this is a proper geodesic path, with a finite number,
$ \nu(x,y) $, of self-intersections.
Let $ \Delta = \{ (p,q) \in \Pi^2 \mid \nu(p,q) = 0 \} $, i.e.,
$\Delta$ consists of pairs $(p,q)$ of parabolic points such that
$ \lambda(p,q) $ is a proper geodesic arc.
(By an {\em arc}, we mean an embedded path.)
Given $ p \in \Pi $, write $ \Pi(p) = \{ q \in \Pi \mid (p,q) \in \Delta \} $.

Pick an element $ (p,q)\in \Delta $.
Then the proper arc $ \lambda(p,q) $ intersects a 
sufficiently small horocycle, $ \tau $, 
in precisely two points.
Let $ \tilde\tau \subseteq D $ be the horocircle centred at $ p $
which is a connected component of the inverse image of $ \tau $,
and let $ \{s_i\}_{ i\in \ZZ} $ be the inverse image of the two points 
in $ \tilde \tau $, located in this order, such that $ s_0=[p,q]\cap \tilde\tau $
and $ \theta(p)s_i=s_{i+2} $.
Then there is a unique element $ g(p,q)\in \Gamma $ 
such that $ g(p,q)p = q $ and $ g(p,q)^{-1}[p,q]\cap \tilde \tau = s_1$.
Namely, $ g(p,q)^{-1}[p,q] $ is the closure of the lift of $ \lambda(p,q) $
with endpoint $ p $ which is closest to $[p,q]$,
among the lifts of $ \lambda(p,q) $ with endpoint $ p $,
with respect to the preferred orientation of $ \tilde \tau $.
(See Figure \ref{Figure1}.)

In the quotient surface $ \Sigma $,
the oriented closed geodesic $ \lambda(g(p,q)) $ is homotopic to 
the simple oriented loop obtained by shortcutting the oriented arc
$ \lambda(p,q) $ by the horocyclic arc which is the image of 
the subarc of $ \tilde\tau $ bounded by $s_0$ and $s_1$.
Thus $ \lambda(g(p,q)) $ is a simple closed geodesic
disjoint from the proper geodesic arc $\lambda(p,q)$.
In particular, $[p, \theta(p)q]\cap \alpha(g(p,q))=\emptyset$.
In fact, the map
$ [(p,q) \mapsto g(p,q)] : \Delta \longrightarrow \Gamma $ 
is characterised by the following properties:
for all $ (p,q) \in \Delta $, we have $ g(p,q) p = q $,
$ g(q,p) g(p,q) = \theta(p) $, and $ [p, \theta(p) q] \cap \alpha(g(p,q)) =
\emptyset $.

\begin{figure}[h]
\begin{center}
\includegraphics{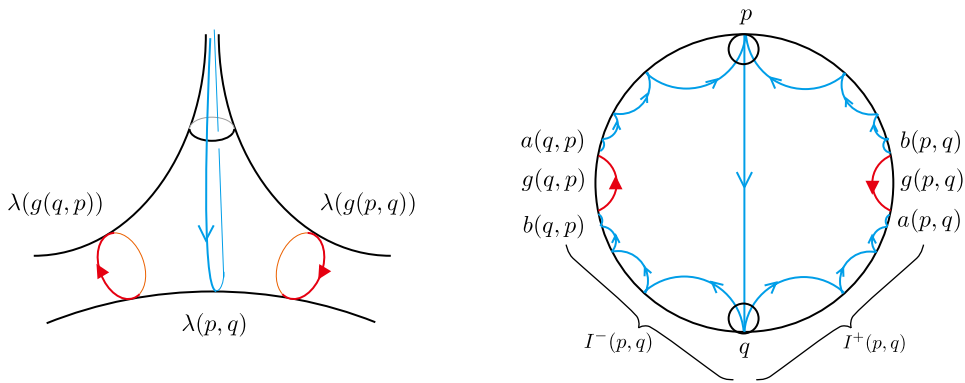}
\end{center}
\caption{
In the right figure, the two red arcs with thick arrows
represent the axes $\alpha(g(p,q))$ and $\alpha(g(q,p))$
of the hyperbolic transformations $g(p,q)$ and $g(q,p)$ respectively.
The blue arcs with thin arrows 
represent the oriented geodesic $[p,q]$
and its images by the infinite cyclic groups 
$ \langle g(p,q) \rangle $ and $ \langle g(q,p) \rangle $.
The three intersection points of the blue arcs and 
the horocircle $ \tilde \tau $ centred at $ p $ are 
$ s_{-1} $, $ s_0 $ and $ s_1 $, from left to right.
\label{Figure1}}
\end{figure}

Write $ a(p,q) = a(g(p,q)) $ and $ b(p,q) = b(g(p,q)) $.
Then the points $ p $, $ a(q,p) $, $ b(q,p) $, $ q $, $ a(p,q) $, $ b(p,q) $
occur in this order around $ C $.
Let $ I^+(p,q) = (q,a(p,q)) $, $ I^-(p,q) = (b(q,p),q) $
and $ I(p,q) = (b(q,p),a(p,q)) $ be open intervals in $ C $.
Thus $ I(p,q) = I^-(p,q) \cup \{ q \} \cup I^+(p,q) $,
$ I(p,q) \cap \theta(p)^n I(p,q) = \emptyset $ for all $ n \ne 0 $, and
$ I(p,q) \cap \theta(p)^n I(q,p) = \emptyset $ for all $ n $.

In the quotient surface $ \Sigma $,
the oriented simple closed geodesics 
$ \lambda(g(p,q)) $ and $ \lambda(g(q,p)) $
cut off a punctured annulus containing the geodesic arc $\lambda(p,q)$,
in which the simple geodesic rays 
$ \lambda(p, a(p,q)) $ and $ \lambda(p, b(q,p)) $
emanating from the puncture 
spiral to 
$ \lambda(g(p,q)) $ and $ \lambda(g(q,p)) $, respectively.
Thus, each of $ I^{\pm}(p,q) $ projects homeomorphically 
onto a \emph{gap} in the horocircle $\tau$,
in the sense of \cite[p.610]{Mcs}.
In fact, each of $ I^{\pm}(p,q) $ is a maximal connected subset of
$ C \bksl \{p\} $ consisting of points $x$ such that the geodesic ray
$\lambda(p,x)$ is non-simple.
Moreover, if $\lambda(p,x)$ is non-simple, then $x$ is 
contained in $ I^{\pm}(p,q) $ for some $ q\in \Pi(p) $
(see \cite{Mcs, TaWZ}).

Write $ {\script I}(p) = \{ I(p,q) \mid q \in \Pi(p) \} $. 
Then we obtain the following as a consequence of 
\cite[Corollary 5]{Mcs} and \cite{BiS} (see also \cite[Section 5]{TaWZ}):

\begin{thm}\label{C1}
The elements of $ {\script I}(p) $ are pairwise disjoint.
The complement, $ C \bksl \bigcup {\script I}(p) $, is a Cantor set
of Hausdorff dimension $ 0 $.
\end{thm}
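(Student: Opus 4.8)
The plan is to reduce everything to a concrete description of the complement $E := C\bksl\bigcup{\script I}(p)$ in terms of the simple rays emanating from $p$, after which the disjointness statement and the three defining properties of a Cantor set fall out either formally or from the geometric input of \cite[Corollary 5]{Mcs} and the sparseness theorem of Birman--Series \cite{BiS}. From the facts recorded just before the theorem, the intervals $I^{+}(p,q)$ and $I^{-}(p,q)$, as $q$ ranges over $\Pi(p)$, are exactly the connected components of the open set $N := \{x\in C\bksl\{p\}\mid \lambda(p,x)\ \text{is not simple}\}$; in particular these intervals are pairwise disjoint. Since $\bigcup{\script I}(p)=N\cup\Pi(p)$, removing it from $C$ leaves $E = \{p\}\cup\bigl(\{x\in C\bksl\{p\}\mid \lambda(p,x)\ \text{is simple}\}\bksl\Pi(p)\bigr)$. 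The one remaining point in the disjointness assertion, that $I(p,q)$ and $I(p,q')$ are disjoint for $q\ne q'$, then reduces to noting that a common point would have to be $q$ or $q'$, while $q\in I^{\pm}(p,q')$ would make $\lambda(p,q)$ non-simple, contradicting $(p,q)\in\Delta$.

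Next I would verify the Cantor-set properties. The set $E$ is compact, being the complement of the open set $\bigcup{\script I}(p)$ in $C$, and nonempty since $p\in E$. It has empty interior: by \cite[Corollary 5]{Mcs}, a corollary of McShane's identity, the gap lengths $|I^{\pm}(p,q)|$ sum to the length of the horocycle, so $\bigcup{\script I}(p)$ has full measure in $C$; and a closed subset of a circle with empty interior is automatically totally disconnected, because its connected components are arcs and a nondegenerate closed arc has nonempty interior. That $E$ has no isolated points is the part carrying genuine surface geometry, and here I would invoke \cite[Corollary 5]{Mcs} (see also \cite[Section 5]{TaWZ}): the gaps $I^{\pm}(p,q)$ accumulate densely on $E$, and between two distinct gaps of ${\script I}(p)$ there always lie further gaps, so the separating points of $E$ accumulate on any prescribed point of $E$. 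Together these say $E$ is a Cantor set.

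For the Hausdorff dimension I would use the displayed description: every point of $E$ other than $p$ is an endpoint on $C$ of the simple complete geodesic $\lambda(p,x)$, and $p$ is likewise an endpoint of a simple complete geodesic, so $E$ is contained in the set of endpoints on $C$ of complete simple geodesics of $\Sigma$. By \cite{BiS} the union $K\subseteq\Sigma$ of all complete simple geodesics is closed and has Hausdorff dimension $1$, with the Birman--Series estimates furnishing economical covers of $K$ by small metric balls; intersecting these covers with the horocycle $\tau$ and re-running the estimate in one dimension shows that $K\cap\tau$, and hence the set of simple directions from $p$, has Hausdorff dimension $0$. Countable stability of Hausdorff dimension then gives $\dim_{H}E=0$.

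I expect the main obstacle to be the two genuinely geometric inputs: confirming that $E$ has no isolated points, which really uses the structure of the gap family rather than any soft argument, and extracting a true Hausdorff-dimension bound on the circle---not merely measure zero---from the Birman--Series covering estimates. By contrast, the disjointness, the compactness, and the total disconnectedness are essentially formal once the description of $E$ is in hand.
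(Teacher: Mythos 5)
The paper offers no proof of this theorem beyond the sentence attributing it to \cite[Corollary 5]{Mcs} and \cite{BiS} (see also \cite[Section 5]{TaWZ}), and your proposal correctly reduces both assertions to exactly those inputs, using the description of the intervals $I^{\pm}(p,q)$ as the connected components of the set of non-simple directions that is recorded just before the statement. The two points you single out as carrying the real content --- perfectness of the complement and the dimension-zero estimate on the horocycle extracted from the Birman--Series covering bounds --- are precisely the parts supplied by the cited references rather than by any argument in this paper, so your account is consistent with, and more detailed than, the paper's own treatment.
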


Here, of course, the Hausdorff dimension is taken with respect
to the euclidean metric on the horocycle, $ \tau $.
Up to a scale factor, this is the same as the Euclidean metric
in the upper-half-space model with $ p $ at $ \infty $.
(Note that we could equally well use the circular metric on
the boundary, $ C $, induced by the Poincar\'e model, since
all the transition functions are M\"obius, and in particular, smooth.)

Write $ R(p) = \{ p \} \cup \Pi(p) \cup (C \bksl \bigcup {\script I}(p))
\subseteq C $.
This is a closed set, whose complementary components are precisely
the intervals $ I^\pm(p,q) $ for $ q \in \Pi(p) $.
Thus the set $R(p)$ is characterised by the following property:
a point $x\in C$ belongs to $R(p)$ if and only if $x\ne p$ and
the geodesic ray $\lambda(p,x)$ in $\Sigma$ is simple.

For $ p\in\Pi $, we define maps 
$ \vepsilon(p) $, $ \qq(p) $ and $ \gs(p) $
from $C \bksl R(p) $ to 
$\{  \mathord{+}, \mathord{-}\}$, $\Pi(p)$ and $\Gamma$, respectively, 
by the following rule.
If $ x \in C \bksl R(p) $, then $ x \in I^\epsilon(p,q) $ for some
unique $ \epsilon = \mathord{\pm} $ and $ q \in \Pi(p) $.
Define $ \vepsilon(p)(x) = \epsilon $, $ \qq(p)(x) = q $, and
$ \gs(p)(x) = g(p,q) $ or $ g(q,p)^{-1} $
according to whether $ \epsilon = \mathord{+} $ or $ \mathord{-}$.
Note that the definition is symmetric under simultaneously
reversing the orientation
on $ C $ and swapping $ \mathord{+} $ with $ \mathord{-} $.

It should be noted that if $ x \in C \bksl R(p) $,
then, in the quotient surface $ \Sigma $,
the geodesic ray $ \lambda(\qq(p)(x), x)=\lambda(q, x) $ 
is obtained from the non-simple geodesic ray $ \lambda(p,x) $
by cutting a loop, homotopic to $ \lambda(\gs(p)(x))=\lambda(g(p,q))$, 
and straightening the resulting piecewise geodesic
 (see Figure \ref{Figure2}).
(In the quotient, we are allowing ourselves to cut out any
peripheral loops that occur at the beginning.)
In particular, if $ x \in \Pi \bksl R(p) $, then 
both $ \lambda(p,x) $ and $ \lambda(\qq(p)(x), x)$
are proper geodesic paths in $ \Sigma $,
and their self-intersection numbers satisfy the inequality  
$ \nu(p,x) > \nu(\qq(p)(x), x) $.

\begin{figure}[h]
\begin{center}
\includegraphics{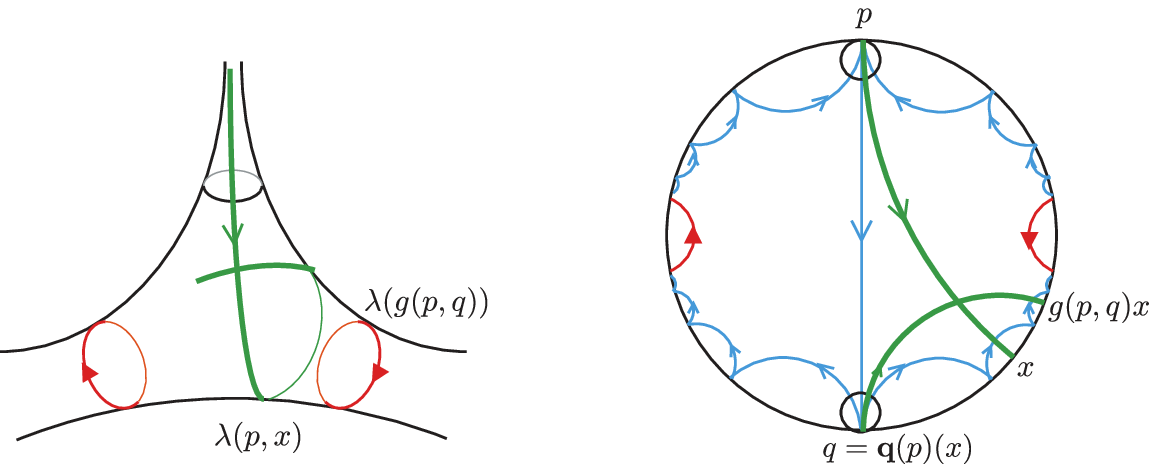}
\end{center}
\caption{
In the figure, we assume 
$ \vepsilon(p)(x) = \mathord{+} $ and so $ \gs(p)(x) = g(p,q) $.
\label{Figure2}}
\end{figure}

By repeatedly applying these maps, 
we associate for a given $ x \in C $, 
a sequence $ (g_i)_i $ in $ \Gamma $, $ (p_i)_i $ in $\Pi$, and
$ (\epsilon_i)_i $ in $\{  \mathord{+}, \mathord{-}\}$ as follows.

{\bf Step 0.}
Pick a parabolic point $ p\in\Pi $, and
define $p_0 =  p $.
Thus, $ p_0 $ is independent of $ x \in C $.

{\bf Step 1.}
If $ x \in R(p_0) $, we stop with the 1-element sequence $ p_0 $,
and  define $ (g_i)_i $ and $ (\epsilon_i)_i $ to be the empty sequence.
If $ x \notin R(p_0) $, set $ g_1 = \gs(p_0)(x) $, $ p_1 = g_1 p_0 $, 
$ \epsilon_1=\vepsilon(p_0)(x) $,
and continue to the next step.
(The sequences $ (g_i)_i $ and $ (\epsilon_i)_i $ begin with index
$ i = 1 $.)

{\bf Step 2.}
If $ x \in R(p_1) $, we stop with the 1-element sequences 
$ g_1 $ and $\epsilon_1$
and $2$-element sequence $p_0, p_1$.
If $ x \notin R(p_1) $, set $ g_2 = \gs(p_1)(x) $, 
$ p_2 = g_2 p_1 $ and $ \epsilon_2=\vepsilon(p_1)(x) $.

We continue this process, forever or until we stop.

We call the resulting sequences $ (g_i)_i $, $ (p_i)_i $ and $ (\epsilon_i)_i $ 
the \emph{derived sequences} for $x$.
More specifically, we call $ (g_i)_i $ and $ (p_i)_i $
the \emph{derived $ \Gamma $-sequence} and 
the \emph{derived $ \Pi $-sequence} for $x$,
respectively.

\begin{lem}\label{C2}
Let $ x\in C $, and let 
$ (g_i)_i $, $ (p_i)_i $ and $ (\epsilon_i)_i $ 
be the derived sequences for $x$.
Then the following hold.

{\rm (1)}
The sequences $ (p_i)_i $ and $ (\epsilon_i)_i $ are determined 
by the sequence $ (g_i)_i $ by the following rule:
$ p_i= h_i p_0 =h_i p $ where $ h_i = g_i g_{i-1} \cdots g_1 $,
and $ \epsilon_i= \mathord{+} $ or $ \mathord{-} $
according to whether $ g_i = g(p_{i-1},p_i) $ or $ g(p_{i-1},p_i)^{-1} $.

{\rm (2)}
A point $ y\in C $ has the derived $ \Gamma $-sequence 
beginning with $ g_1, g_2, \dots, g_n $ for some $ n\ge 1 $,
if and only if 
$ y \in \bigcap_{i=1}^n I^{\epsilon}(p_{i-1}, p_i) $.

{\rm (3)}
Set $ R = \bigcup_{p \in \Pi} R(p) $.
If $ x \notin R $, then the derived $ \Gamma $-sequence $ (g_i)_i $ is infinite.

{\rm (4)}
If $ x \in \Pi $, then the derived $ \Gamma $-sequence $ (g_i)_i $ is finite.
\end{lem}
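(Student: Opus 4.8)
\medskip
\noindent\emph{Proof plan.}
I would take the four assertions in order, since (2) builds on (1) and since (3) and (4) are read off from the stopping rule once (1) is in place. For (1) I would simply unwind the recursion. Since $p_i=g_ip_{i-1}$ by construction, an immediate induction gives $p_i=g_ig_{i-1}\cdots g_1\,p_0=h_ip$. For the sign, recall that $g_i=\gs(p_{i-1})(x)$ equals $g(p_{i-1},q)$ when $\vepsilon(p_{i-1})(x)=+$ and $g(q,p_{i-1})^{-1}$ when $\vepsilon(p_{i-1})(x)=-$, where $q=\qq(p_{i-1})(x)$; since $g(p,q)$ carries $p$ to $q$, and hence $g(q,p)^{-1}$ carries $p$ to $q$ too, in either case $g_ip_{i-1}=q$, so $q=p_i$ and $g_i$ equals $g(p_{i-1},p_i)$ or $g(p_i,p_{i-1})^{-1}$ according to whether $\epsilon_i=+$ or $-$. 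The one thing to check is that these two possibilities are distinct elements of $\Gamma$, so that $g_i$ recovers $\epsilon_i$: a common value would carry $p_{i-1}$ to $p_i$, would force the ``$q$'' of both readings to equal $p_i$, and would then contradict $g(p_i,p_{i-1})\,g(p_{i-1},p_i)=\theta(p_{i-1})\ne1$.

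For (2) I would induct on $n$, with $n=1$ being the definition of $\gs(p_0)$ together with the fact that $R(p_0)$ is exactly the complement of $\bigcup_q I^{\pm}(p_0,q)$. If $y\in C$ has derived $\Gamma$-sequence beginning with $g_1,\dots,g_{n-1}$, then by (1) its first $n-1$ parabolic points and signs agree with those of $x$; in particular its $(n-1)$-st parabolic point is again $p_{n-1}$. Hence $y$'s sequence extends with $n$-th term $g_n$ precisely when $y\notin R(p_{n-1})$ and $\gs(p_{n-1})(y)=g_n$, which, by the description of the complementary intervals of $R(p_{n-1})$ and the injectivity noted in (1), is exactly the condition $y\in I^{\epsilon_n}(p_{n-1},p_n)$. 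Intersecting over $i=1,\dots,n$ gives the claim.

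Part (3) is immediate: the process halts at step $n$ only because $x\in R(p_{n-1})$, and every $p_{n-1}$ lies in $\Pi$, so a finite derived $\Gamma$-sequence forces $x\in\bigcup_{p\in\Pi}R(p)=R$ --- there being no other way to stop, since $\gs(p_{n-1})$ is defined on all of $C\bksl R(p_{n-1})$. Contrapositively, $x\notin R$ yields an infinite sequence.

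Part (4) is the only one with genuine content, and it is the step I would flag as the crux --- although the substantive input, the self-intersection inequality recorded just above the statement, is available to me and I take it as given. Let $x\in\Pi$. If $x\in R(p_0)$ the $\Gamma$-sequence is empty, hence finite. Otherwise, at each step $i$ at which we have not yet halted we have $x\in\Pi\bksl R(p_{i-1})$, so $\lambda(p_{i-1},x)$ and $\lambda(\qq(p_{i-1})(x),x)=\lambda(p_i,x)$ are proper geodesic paths with $\nu(p_{i-1},x)>\nu(p_i,x)$. Moreover $x\notin R(p_{i-1})$ forces $p_{i-1}\ne x$, while $p_i=\qq(p_{i-1})(x)\in\Pi(p_{i-1})\subseteq R(p_{i-1})$ cannot be $x$ either, so every $\nu(p_i,x)$ is a well-defined non-negative integer. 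A strictly decreasing sequence of non-negative integers is finite, so the construction halts after at most $\nu(p_0,x)$ steps, which is (4). (Combining (3) and (4) recovers $\Pi\subseteq R$, consistently with $x\in R(x)$.)
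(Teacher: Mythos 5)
Your proof is correct and follows essentially the same route as the paper: parts (1)--(3) are unwound directly from the definitions (the paper simply asserts this), and part (4) is proved exactly as in the paper, by noting that the self-intersection numbers $\nu(p_i,x)$ form a strictly decreasing sequence of non-negative integers, so the process must terminate. The extra details you supply (the injectivity check distinguishing $g(p_{i-1},p_i)$ from $g(p_i,p_{i-1})^{-1}$ via $g(q,p)g(p,q)=\theta(p)$, and the induction in (2)) are sound and merely flesh out what the paper leaves implicit.
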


\begin{proof}
(1), (2) and (3) follow directly from the definition of the derived sequences.
To prove (4),
let $ x $ be a point in $ \Pi $.
If $ x \in R(p) $, then $ (g_i)_i $ is the empty sequence.
So we may assume $ x \in \Pi \bksl R(p) $.
Then by repeatedly using the observation made
prior to the construction of the derived sequences,
we see that the self-intersection number  $ \nu(p_i,x) $
of the proper geodesic path $ \lambda(p_i,x) $ 
is strictly decreasing.
Hence $ \nu(p_n, x)=0 $ for some $n$.
This means that $x \in R(p_n)$ and so the derived sequences 
terminate at $n$.
\end{proof}

The following is an immediate consequence of Lemma \ref{C2}(2).

\begin{cor}\label{C3}
Suppose that $ x \in C $ has derived $ \Gamma $-sequence beginning with 
$ g_1 ,\ldots, g_n $ 
for some $ n \ge 1 $.
Then there is an open set, $ U \subseteq C $, containing $ x $, such that
if $ y \in U $, then $ g_1 ,\ldots, g_n $ is also an initial segment
of the derived $ \Gamma $-sequence for $ y $.
\end{cor}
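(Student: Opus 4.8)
The plan is to read the statement straight off Lemma~\ref{C2}(2), which has already repackaged the relevant combinatorial data in terms of honest open intervals of $C$, so that essentially nothing remains to be done.

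First I would unwind what the hypothesis means. If $x$ has derived $\Gamma$-sequence beginning with $g_1,\ldots,g_n$, then by Lemma~\ref{C2}(1) the associated initial segment $p_0,\ldots,p_n$ of the derived $\Pi$-sequence and the signs $\epsilon_1,\ldots,\epsilon_n$ are completely determined by $g_1,\ldots,g_n$ (via $p_i=g_i\cdots g_1\,p$ and the stated rule for the $\epsilon_i$); in particular they do not depend on which point $x$ realizing this $\Gamma$-sequence we picked. Then Lemma~\ref{C2}(2) says precisely that a point $y\in C$ has derived $\Gamma$-sequence beginning with $g_1,\ldots,g_n$ if and only if
\[
 y \in \bigcap_{i=1}^{n} I^{\epsilon_i}(p_{i-1},p_i).
\]

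Next I would observe that the right-hand side is an open subset of $C$: for each $i$, either $I^{+}(p_{i-1},p_i)=(p_i,a(p_{i-1},p_i))$ or $I^{-}(p_{i-1},p_i)=(b(p_i,p_{i-1}),p_i)$, and both are open intervals of $C$ by the definitions given just before Theorem~\ref{C1}; a finite intersection of open sets is open. Set $U:=\bigcap_{i=1}^{n} I^{\epsilon_i}(p_{i-1},p_i)$. Since $x$ itself has derived $\Gamma$-sequence beginning with $g_1,\ldots,g_n$, one direction of Lemma~\ref{C2}(2) gives $x\in U$; and for any $y\in U$ the other direction gives that $g_1,\ldots,g_n$ is an initial segment of the derived $\Gamma$-sequence for $y$. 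That is exactly the assertion.

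There is no real obstacle: the content has been front-loaded into Lemma~\ref{C2}(2), and the only thing to verify beyond quoting it is the openness of each $I^{\pm}(p,q)$, which is immediate from their definitions as open intervals. The one point worth a sentence of care is the standing hypothesis $n\ge 1$, which guarantees that the indexing of the $g_i$, $p_i$, $\epsilon_i$ in Lemma~\ref{C2}(2) is meaningful and that the intersection defining $U$ is taken over a nonempty index set; this matches exactly the hypothesis of the corollary, so no extra case analysis is needed.
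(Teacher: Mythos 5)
Your proposal is correct and is exactly the argument the paper intends: the corollary is stated there as an immediate consequence of Lemma~\ref{C2}(2), with $U=\bigcap_{i=1}^{n} I^{\epsilon_i}(p_{i-1},p_i)$ an open set containing $x$ whose points are precisely those whose derived $\Gamma$-sequence begins with $g_1,\ldots,g_n$. Your added remarks on the openness of the intervals and the role of $n\ge 1$ are accurate and harmless elaborations.
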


Recall from Section \ref{SB} that $ A(\Gamma,\pph,p) $ denotes the
subgroup of $ \Aut(\Gamma) $ preserving $ \Pi $ setwise and
fixing $ p \in \Pi $.

\begin{lem}\label{C4}
Let $ \phi $ be an element of $ A= A(\Gamma,\pph,p) $
with $p=p_0$.
Then the following holds for every point $ x \in C $.
If $ (g_i)_i $, $ (p_i)_i $ and $ (\epsilon_i)_i $ are 
the derived sequences for $ x $,
then the derived sequences for $ \phi x $ are
$ (\phi(g_i))_i $, $ (\phi p_i)_i $ and $ (\deg(\phi)\epsilon_i)_i $.
\end{lem}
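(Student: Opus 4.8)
\emph{Strategy.} I would reduce the lemma to a single statement describing how $\phi$ interacts with the cutting construction at an \emph{arbitrary} parabolic point, and then run the evident induction on the index $i$. First record how $\phi$ (acting on $C=\partial\Gamma$ as the $\Gamma$-equivariant homeomorphism of Section \ref{SB}, preserving $\Pi$, fixing $p=p_0$, and preserving or reversing the preferred orientation of $C$ according as $\deg\phi=+$ or $-$, hence in either case preserving the linking relation on pairs of points) is compatible with the data of Section \ref{SC}: (i) for hyperbolic $h\in\Gamma$, $a(\phi h)=\phi(a(h))$ and $b(\phi h)=\phi(b(h))$, since attracting/repelling fixed points are transported by the conjugacy $\phi|_C$; (ii) for $p'\in\Pi$, $\phi(\theta(p'))=\theta(\phi p')^{\deg\phi}$, since $\theta(p')$ generates $\stab_\Gamma(p')$ and the sign of its translation along $C\bksl\{p'\}$ is reversed exactly when $\phi$ reverses orientation; (iii) ``$\lambda(p',q')$ is a proper arc'' and, more generally, ``$\lambda(p',x)$ is non-simple'' are linking conditions on the $\Gamma$-orbit of $\{p',q'\}$ (resp.\ $\{p',x\}$), so $\phi$ preserves $\Delta$, $\phi(\Pi(p'))=\Pi(\phi p')$, $\phi(R(p'))=R(\phi p')$, and $\phi$ carries $N(p'):=C\bksl R(p')$ homeomorphically onto $N(\phi p')$. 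Finally I use one elementary fact about the discrete torsion-free group $\Gamma$: if $h_1,h_2\in\Gamma$ are hyperbolic with $a(h_1)=a(h_2)$, then they share an axis, so $h_1=\pi^{m_1}$, $h_2=\pi^{m_2}$ for a common primitive $\pi$ with $m_1,m_2$ of the same sign; if in addition $h_1p'=h_2p'$ for some $p'\in\Pi$, then $\pi^{m_1-m_2}$ fixes the parabolic point $h_2p'$, forcing $m_1=m_2$, i.e.\ $h_1=h_2$.

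\emph{Key Lemma (at an arbitrary $p'\in\Pi$).} For all $x\in C$: $x\in R(p')$ iff $\phi x\in R(\phi p')$; and if $x\notin R(p')$, then, with $\epsilon=\vepsilon(p')(x)$ and $q'=\qq(p')(x)$, one has $\vepsilon(\phi p')(\phi x)=\deg(\phi)\epsilon$, $\qq(\phi p')(\phi x)=\phi q'$, and $\gs(\phi p')(\phi x)=\phi(\gs(p')(x))$. \emph{Proof sketch:} Recall from Section \ref{SC} that the connected components of $N(p')$ are exactly the intervals $I^+(p',q')=(q',a(p',q'))$ and $I^-(p',q')=(b(q',p'),q')$, $q'\in\Pi(p')$; each is an open interval, one of whose endpoints is the parabolic point $q'$ while the other is a hyperbolic fixed point (hence a conical limit point, not parabolic), and $q'$ is the positively-lower endpoint of $I^+(p',q')$ and the positively-upper one of $I^-(p',q')$. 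Since $\phi$ maps $N(p')$ homeomorphically onto $N(\phi p')$, preserves the parabolic/non-parabolic dichotomy among component endpoints, and exchanges the two ``sides'' of a parabolic endpoint exactly when $\deg\phi=-$, it follows that $\phi(I^\epsilon(p',q'))=I^{\deg(\phi)\epsilon}(\phi p',\phi q')$; evaluating at $x$ gives the formulas for $\vepsilon(\phi p')(\phi x)$ and $\qq(\phi p')(\phi x)$. Comparing the non-parabolic endpoints of these two intervals and invoking (i) shows that $\phi(g(p',q'))$ has the same attracting fixed point as $g(\phi p',\phi q')$ if $\deg\phi=+$, resp.\ as $g(\phi q',\phi p')^{-1}$ if $\deg\phi=-$; since also $\phi(g(p',q'))\cdot\phi p'=\phi q'$, the elementary fact above gives $\phi(g(p',q'))=g(\phi p',\phi q')$ if $\deg\phi=+$ and $\phi(g(p',q'))=g(\phi q',\phi p')^{-1}$ if $\deg\phi=-$. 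Unwinding the definition of $\gs$ (it is $g(p',q')$ when $\epsilon=+$ and $g(q',p')^{-1}$ when $\epsilon=-$), and applying the last identity to the pair $(q',p')$ as well when $\epsilon=-$, one obtains $\gs(\phi p')(\phi x)=\phi(\gs(p')(x))$ in every case.

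Granting the Key Lemma, Lemma \ref{C4} follows by induction on $i$. Let $(g_i),(p_i),(\epsilon_i)$ be the derived sequences of $x$ and $(g_i'),(p_i'),(\epsilon_i')$ those of $\phi x$. One checks that the two constructions stop at the same step and that $p_i'=\phi p_i$, $g_i'=\phi(g_i)$, $\epsilon_i'=\deg(\phi)\epsilon_i$: at step $0$, $p_0'=p=\phi p=\phi p_0$; assuming the claim through index $i-1$ (so $p_{i-1}'=\phi p_{i-1}$), the Key Lemma at the parabolic point $p_{i-1}$ shows $\phi x\in R(p_{i-1}')=R(\phi p_{i-1})$ iff $x\in R(p_{i-1})$ — so both constructions continue or both stop — and, when both continue, $\epsilon_i'=\vepsilon(\phi p_{i-1})(\phi x)=\deg(\phi)\vepsilon(p_{i-1})(x)=\deg(\phi)\epsilon_i$, $g_i'=\gs(\phi p_{i-1})(\phi x)=\phi(\gs(p_{i-1})(x))=\phi(g_i)$, and $p_i'=g_i'p_{i-1}'=\phi(g_i)\phi(p_{i-1})=\phi p_i$, completing the induction.

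I expect the Key Lemma to be where all the content lies, and within it the delicate point is the orientation bookkeeping: tracking which of the two intervals flanking a given parabolic point $\phi$ sends $I^\epsilon(p',q')$ to, and pinning $\phi(g(p',q'))$ down \emph{exactly} rather than merely up to a power of, or a $\theta(\phi p')$-translate of, the expected element. The argument above achieves this via the parabolic/non-parabolic distinction among interval endpoints together with discreteness of $\Gamma$; alternatively one could try to use the three-property characterisation of $(p,q)\mapsto g(p,q)$ stated in Section \ref{SC}, but then one must check that the condition $[p,\theta(p)q]\cap\alpha(g(p,q))=\emptyset$ is respected by $\phi$, which — since $\theta$ gets inverted when $\deg\phi=-$ — is precisely the fiddly point one would rather circumvent.
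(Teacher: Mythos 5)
Your proposal is correct and follows essentially the same route as the paper: the paper's proof is exactly "induction, using the facts $\phi(R(p))=R(p)$, $\phi(\theta(p))=\theta(p)^{\pm1}$, $\phi(I^{\epsilon}(p,q))=I^{\pm\epsilon}(p,\phi q)$, $\phi(g(p,q))=g(p,\phi q)$ or $g(\phi q,p)^{-1}$," which is your Key Lemma. The only difference is that you state and prove these compatibility facts at an arbitrary parabolic point $p'$ (which is what the induction actually requires, since $p_{i-1}$ is not fixed by $\phi$) and supply the verification via the parabolic/conical endpoint dichotomy and discreteness, details the paper leaves to the reader.
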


\begin{proof}
This can be proved through induction,
by using the fact that the following hold for each $ \phi \in A $.
\begin{enumerate}
\item
$ \phi(R(p)) = R(p) $.
\item
For any $ q\in \Pi(p) $, we have:
\begin{enumerate}
\item
If $ \phi $ is orientation-preserving, then $ \phi(\theta(p)) = \theta(p) $,
$ \phi(I^{\epsilon}(p,q)) = I^{\epsilon}(p,\phi(q)) $, 
$ \phi(g(p,q)) = g(p, \phi q) $, and $ \phi(g(q,p)) = g(\phi q, p) $.
\item
If $ \phi $ is orientation-reversing, then $ \phi(\theta(p)) = \theta(p)^{-1} $,
$ \phi(I^{\epsilon}(p,q)) = I^{-\epsilon}(p,\phi(q)) $, 
$ \phi(g(p,q)) = g(\phi q, p)^{-1} $, and
$ \phi(g(q,p)) = g(p, \phi q)^{-1} $.
\end{enumerate}
\end{enumerate}
\end{proof}

\section{Filling arcs}\label{SD}

Let $ x $ be a point in $ C $
and $ (p_i)_{i} $ the (finite or infinite)
derived $\Pi$-sequence for $x$.
Write $ \lambda_i = \lambda(p_{i-1},p_i) $ for the projection
of $ [p_{i-1},p_i] \cap D $ to $ \Sigma $.
This is a proper geodesic arc in $ \Sigma $.
We call the sequence $(\lambda_i)_i$ 
the \emph{derived sequence of arcs} for $x$.
We say that $ x $ is \emph{filling}
if the arcs $ (\lambda_i)_i $ 
eventually fill $ \Sigma $,
namely,  there is some $ n $ such that
$ \Sigma \bksl \bigcup_{i=1}^n \lambda_i $ is a union of open discs.
Let $ F $ be the subset of $C$ consisting of points which are filling.
In this section, we prove the following proposition.

\begin{prop}\label{D1}
The set $ F $ is open in $ C $, and its complement has 
Hausdorff dimension strictly less than $1$.
In particular, $ F $ has full measure.
\end{prop}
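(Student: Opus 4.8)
The plan is to show $F$ is open by a direct argument from the combinatorics of derived sequences, and then to control the complement $C \bksl F$ by relating it to the Cantor sets $C \bksl \bigcup \script{I}(p)$ appearing in Theorem \ref{C1}.

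First I would establish openness. Suppose $x \in F$, so there is $n$ with $\Sigma \bksl \bigcup_{i=1}^n \lambda_i$ a union of open discs, where $(\lambda_i)_i = (\lambda(p_{i-1},p_i))_i$ is the derived sequence of arcs for $x$. The first $n$ arcs are determined by the initial segment $g_1,\dots,g_n$ of the derived $\Gamma$-sequence for $x$ (via Lemma \ref{C2}(1), which recovers the $p_i$ from the $g_i$). By Corollary \ref{C3}, there is an open neighbourhood $U$ of $x$ in $C$ such that every $y \in U$ has derived $\Gamma$-sequence beginning with $g_1,\dots,g_n$, hence has the same first $n$ derived arcs $\lambda_1,\dots,\lambda_n$; therefore every such $y$ is filling, and $U \subseteq F$. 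One subtlety: a priori $y$ might have a \emph{shorter} derived sequence (it could terminate before step $n$, if $y \in R(p_i)$ for some $i < n$). I would note that Corollary \ref{C3} as stated already asserts $g_1,\dots,g_n$ is an \emph{initial segment} of the derived sequence for $y$, so in particular the derived sequence for $y$ has length at least $n$; this rules out early termination and the argument goes through.

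Next, for the complement. The key observation is that if $x$ is \emph{not} filling, then at no stage does the union of derived arcs fill $\Sigma$, and in particular the arc $\lambda_1 = \lambda(p_0, p_1)$ together with all the later arcs fails to fill; but more usefully, one can run the argument starting from an arbitrary $p_i$ in place of $p_0$. The set of $x$ whose derived sequence terminates, i.e. $x \in R$ in the notation of Lemma \ref{C2}, is a countable union of translates of sets of the form $\{p\} \cup \Pi(p) \cup (C \bksl \bigcup \script{I}(p))$. By Theorem \ref{C1} each $C \bksl \bigcup \script{I}(p)$ is a Cantor set of Hausdorff dimension $0$; $\Pi(p)$ is countable; and countable unions of Hausdorff-dimension-$0$ sets have Hausdorff dimension $0$. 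So the non-filling points with terminating derived sequences form a set of Hausdorff dimension $0$, hence certainly of dimension strictly less than $1$, with respect to any hyperbolic structure (the dimension-$0$ conclusion is metric-independent among the euclidean metrics on $\tau$, which are quasisymmetrically related). For the points with \emph{infinite} derived sequence that are nonetheless non-filling: here I would show that the intersection condition of Lemma \ref{C2}(2), together with the constraint that the arcs $\lambda_1,\lambda_2,\dots$ never fill, forces $x$ into a nested intersection of intervals whose combinatorics is subordinate to the Cantor-set structure — concretely, a non-filling infinite derived sequence stays, after possibly finitely many steps, within the subsurface cut off by the arcs seen so far, and the points of $C$ whose ray lies in a fixed proper incompressible subsurface form a set carried by a limit set of a geometrically infinite-index subgroup, which by a dimension estimate in the spirit of Theorem \ref{C1} (via \cite{Mcs, BiS, TaWZ}) has Hausdorff dimension $< 1$.

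The main obstacle I expect is precisely this last step: bounding the Hausdorff dimension of the non-filling points with infinite derived sequence. The terminating case is essentially a bookkeeping reduction to Theorem \ref{C1}, but the infinite non-filling case requires understanding the limit set of rays trapped in a proper subsurface and showing it is "thin" uniformly over all the countably many subsurfaces that can arise, and then assembling a countable union. I would handle this by fixing the (finite) list of homotopy classes of proper subsurfaces $\Sigma' \subsetneq \Sigma$ that can occur as $\Sigma \bksl \bigcup_{i=1}^n \lambda_i$, observing that being non-filling means the derived arcs are eventually all contained in some such $\Sigma'$ (up to isotopy), and then bounding $\dim_H$ of the corresponding set of endpoints by a McShane-type or Bowen-type argument — this is where the real work lies, and it may require invoking a quantitative version of the statements cited for Theorem \ref{C1} rather than Theorem \ref{C1} itself as a black box. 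The final "full measure" assertion is then immediate, since a subset of $T$ of Hausdorff dimension strictly less than $1$ has $1$-dimensional (hence euclidean) measure zero, so its complement $F$ has full measure with respect to any hyperbolic structure.
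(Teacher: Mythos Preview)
Your openness argument is correct and matches the paper's proof exactly.

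For the complement, your high-level structure---non-filling implies the derived arcs miss some essential subsurface, so $x$ lies in the limit set of the corresponding subgroup, and such limit sets have Hausdorff dimension strictly less than $1$---is precisely the paper's strategy. The paper phrases it via simple closed geodesics: if $x \notin R$ is non-filling, some simple closed geodesic $\gamma$ is disjoint from every $\lambda_i$, and then (Lemmas \ref{D2}--\ref{D3}, \ref{D5}) $x \in \Lambda G(\gamma)$ where $G(\gamma) = \pi_1(X(\gamma))$ with $X(\gamma)$ the cusp-side component of $\Sigma \setminus \gamma$. Thus $C \setminus F \subseteq R \cup L$ with $L = \bigcup_\gamma \Lambda G(\gamma)$.

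There is, however, a genuine gap in your plan at exactly the point you flag. You write ``fixing the (finite) list of homotopy classes of proper subsurfaces $\Sigma' \subsetneq \Sigma$'': this is wrong. The relevant subsurfaces (equivalently, the simple closed geodesics $\gamma$) form a countably \emph{infinite} family of isotopy classes, not a finite one, so knowing each limit set has dimension $<1$ is not enough---you need a \emph{uniform} bound $\theta < 1$ over all $\gamma$ to conclude the countable union $L$ has dimension $\le \theta < 1$. The paper supplies this as Lemma \ref{D4}, using the Tukia--Falk--Matsuzaki result \cite{Tu1, FalM, Mat} that geometrically finite groups which are uniformly ``geometrically tight'' have limit-set dimension bounded by a uniform $\theta < 1$; tightness here reduces to the observation that $X(\gamma) \setminus B$ has diameter bounded by $\diam(\Sigma \setminus B)$ independently of $\gamma$. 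The references you propose (McShane, Birman--Series, Tan--Wong--Zhang) establish Hausdorff dimension $0$ for the set of \emph{simple} geodesic endpoints, but the limit sets $\Lambda G(\gamma)$ have strictly positive dimension, so those results do not give what you need here.
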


We begin with some preparation.
Let $ \gamma $ be a simple closed geodesic in $ \Sigma $,
and let $ X(\gamma) $ be the path-metric completion
of the component of $ \Sigma \bksl \gamma $ containing the cusp.
Then we can identify $ X(\gamma) $ as $ (H(G) \cap D)/G $, where
$ G = G(\gamma) $ is a subgroup of $ \Gamma $ containing 
$ Z= \stab_\Gamma(p) $,
and $ H(G) \subseteq D \cup C $ is the convex hull of the limit set
$ \Lambda G \subseteq C $.
In other words, 
$ X(\gamma) $
is the ``convex core'' of
the hyperbolic surface 
$ {\bbf H}^2/G $.
Note that 
$ G = G(\gamma) \cong \pi_1(X(\gamma)) $
and $ p \in \Lambda G $.

Let $ \delta $ be the closure of a component of $ \partial H(G) \cap D $.
This is a bi-infinite geodesic in $ D \cup C $.
Let $ J \subseteq C $ be the component of $ C \bksl \delta $ not
containing $ p $.
Thus, $ J $ is an open interval in $ C $, which is a component of
the discontinuity domain of $ G $.
Note in particular, that $ J \cap Gp = \emptyset $.

\begin{lem}\label{D2}
Suppose $ x \in J \bksl R(p) $, and
let $ g = \gs(p)(x) $, $ \epsilon = \vepsilon(p)(x) $ and
$ q = \qq(p)(x) $.
Then, if $ g \in G = G(\gamma) $, 
we have $ J \subseteq I^{\epsilon}(p,q) $.
In particular, $ \gs(p)(y)= g $ for every $ y\in J $.
\end{lem}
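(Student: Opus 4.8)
The plan is to understand the geometry behind the maps $\gs(p), \vepsilon(p), \qq(p)$ in terms of the subgroup $G = G(\gamma)$, and to exploit the fact that $x$ lies in the discontinuity component $J$ of $G$ cut off by the geodesic $\delta \subseteq \partial H(G)$. First I would recall that, by definition of the loop-cutting construction, $x \in I^\epsilon(p,q)$ means exactly that the geodesic ray $\lambda(p,x)$ in $\Sigma$ fails to be simple, and the first loop one cuts off is homotopic to $\lambda(g)$ where $g = g(p,q)$ or $g(q,p)^{-1}$. So the hypothesis $g \in G$ says that this first self-intersection, and the corresponding simple closed geodesic being cut off, live inside the subsurface $X(\gamma)$ (the convex core of $\HH^2/G$). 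The key point is that this forces the entire interval $J$ to be "on the same side" of the cutting data as $x$.

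\medskip

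\textbf{Main argument.} The crucial observation is that $\alpha(g) \subseteq H(G)$: since $g \in G$ is hyperbolic, both its fixed points $a(g), b(g)$ lie in $\Lambda G$, hence the axis lies in the convex hull. Recall from Section~\ref{SC} that $I^\epsilon(p,q)$ is one of the two open intervals $(q, a(p,q))$ or $(b(q,p), q)$; in either case its endpoints are among $\{q, a(p,q), b(p,q)\} = \{q\} \cup \{\text{fixed points of } g(p,q)\}$ together with the corresponding data for $g(q,p)$, all of which lie in $\Lambda G$ (note $q = g(p,q)p \in Gp \subseteq \Lambda G$, and similarly the relevant fixed points). Now $J$ is a component of $C \setminus \delta$ with $\delta$ a boundary geodesic of $H(G)$, so $J$ is disjoint from $\Lambda G$; in particular $J$ contains no endpoint of $I^\epsilon(p,q)$. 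Since $x \in J \cap I^\epsilon(p,q)$, the connected set $J$ cannot exit the interval $I^\epsilon(p,q)$ without crossing an endpoint, so $J \subseteq I^\epsilon(p,q)$. For the last sentence: if $y \in J$, then $y \in I^\epsilon(p,q)$, so $y \notin R(p)$ and by definition $\qq(p)(y) = q$, $\vepsilon(p)(y) = \epsilon$, hence $\gs(p)(y) = g$.

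\medskip

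\textbf{The main obstacle.} The delicate step is verifying rigorously that $J \cap \Lambda G = \emptyset$ and hence that $J$ contains no endpoint of $I^\epsilon(p,q)$ — i.e.\ that all the endpoints genuinely lie in $\Lambda G$. The endpoint $q = g(p,q)p$ is clearly in $Gp \subseteq \Lambda G$. The endpoints $a(p,q), b(p,q)$ are fixed points of $g(p,q) \in G$, so also in $\Lambda G$; and if $\epsilon = -$, the relevant endpoint $b(q,p)$ is a fixed point of $g(q,p)$, and here one must check $g(q,p) \in G$ as well — this follows from $g(q,p)g(p,q) = \theta(p)$ with $\theta(p) \in Z \subseteq G$ and $g(p,q) \in G$, so $g(q,p) = \theta(p) g(p,q)^{-1} \in G$. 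Once all endpoints of $I^\epsilon(p,q)$ are confirmed to lie in $\Lambda G$, and $J$ is disjoint from $\Lambda G$ since $J$ lies in the discontinuity domain of $G$ (being a component of $C \setminus \delta$ on the non-$p$ side of a boundary geodesic of the convex hull), the connectedness argument closes the proof. A minor check is that $J$ and $I^\epsilon(p,q)$ are actually nested rather than one being properly contained with the wrong orientation, but since they share the point $x$ and $J$ meets no endpoint of $I^\epsilon(p,q)$, nesting $J \subseteq I^\epsilon(p,q)$ is forced.
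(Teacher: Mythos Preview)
Your proof is correct and takes essentially the same approach as the paper's: both show that the endpoints of $I^\epsilon(p,q)$ lie in $\Lambda G$ (the paper phrases this geometrically, noting that $[p,q]$ and $\alpha(g(p,q))$ lie in $H(G)$ and are therefore disjoint from $\delta \subseteq \partial H(G)$, after reducing by symmetry to $\epsilon = +$), so the connected interval $J \subseteq C \setminus \Lambda G$, which meets $I^\epsilon(p,q)$ at $x$, must lie inside it. One tiny remark on your ``main obstacle'' paragraph: when $\epsilon = -$ you have $g = g(q,p)^{-1}$ by definition of $\gs(p)$, so $g(q,p) \in G$ follows immediately from the hypothesis $g \in G$ without needing the relation $g(q,p)g(p,q) = \theta(p)$ (your route assumes $g(p,q) \in G$, which in that case is the conclusion rather than the hypothesis).
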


\begin{proof}
To simplify notation we can assume (via the orientation reversing
symmetry of the construction) that $ \epsilon = \mathord{+} $.
Note that $ q \in Gp \subseteq \Lambda G $, so $ [p,q] \subseteq H(G) $.
Also $ \alpha(g(p,q)) \subseteq H(G) $ and $ \delta \subseteq \partial H(G) $.
It follows that $ [p,q] $, $ \alpha(g(p,q)) $ and $ \delta $ are
pairwise disjoint.
Thus, $ J $ lies in a component of $ Y:= C \bksl \{p, q, a(p,q), b(p,q)\} $.
Since $ \epsilon = \mathord{+} $, the four points, 
$ p, q, a(p,q), b(p,q) $ are located in $ C $ in this cyclic order,
and so $ I^+(p,q)=(q, a(p,q)) $ is a component of $ Y $.
Since $ J $ and $ I^+(p,q) $ share the point $ x $,
we obtain the first assertion that $ J \subseteq I^{\epsilon}(p,q) $ 
with  $ \epsilon = \mathord{+} $.
The second assertion follows from the first assertion and  
the definition of $ \gs(p)(y) $.
\end{proof}

\begin{lem}\label{D3}
Suppose that $ x \in J $ and that the derived $ \Gamma $-sequence $ (g_i)_i $ 
for $ x $
is infinite.
Then there is some $ i $ such that $ g_i \notin G = G(\gamma) $.
\end{lem}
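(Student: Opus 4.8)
The plan is to argue by contradiction: suppose $x \in J$ has an infinite derived $\Gamma$-sequence $(g_i)_i$, all of whose terms lie in $G = G(\gamma)$. I want to derive a contradiction with the fact that $X(\gamma) = (H(G)\cap D)/G$ has strictly smaller complexity than $\Sigma$ — in particular, that the geodesic rays from $p$ living inside $X(\gamma)$ can only be ``straightened'' finitely many times before becoming simple (in $X(\gamma)$).

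First I would set up the following observation, parallel to the discussion preceding the derived sequences but carried out inside $X(\gamma)$ rather than $\Sigma$: since $x \in J$, the geodesic ray $\lambda(p,x)$ enters the subsurface $X(\gamma)$ and stays there (because $J$ is the component of $C\bksl\delta$ not containing $p$, so $[p,x]\subseteq H(G)$). Now I would show by induction on $i$ that, as long as $g_1,\dots,g_i \in G$, all the parabolic points $p_0=p,p_1,\dots,p_i$ lie in $\Lambda G$ (indeed $p_i = h_i p_0$ with $h_i = g_i\cdots g_1 \in G$ by Lemma \ref{C2}(1)), and moreover $x \in J$ together with Lemma \ref{D2} forces the relevant geodesics $[p_{i-1},p_i]$, the axes $\alpha(g_i)$, and the arcs $\lambda_i = \lambda(p_{i-1},p_i)$ to all live in $H(G)$, hence project into $X(\gamma)$. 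The key point is that Lemma \ref{D2}, applied with base point $p_{i-1}$ (which is a $G$-parabolic point, so the same convex-hull argument applies to $X(\gamma)$ viewed as the convex core with cusp at $p_{i-1}$), shows that the loop-cutting step at $p_{i-1}$ for the point $x$ is ``internal'' to $X(\gamma)$.

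Next, I would invoke the decreasing-complexity mechanism. The issue is that $x$ need not lie in $\Pi$, so I cannot literally use the self-intersection number argument of Lemma \ref{C2}(4). Instead I would observe that $X(\gamma)$ is a hyperbolic surface with strictly fewer topological features than $\Sigma$ (it is obtained by cutting along $\gamma$ and keeping the cusped piece), so it admits only finitely many isotopy classes of simple closed geodesics that can arise as shortcutting loops, or — more robustly — I would note that the arcs $\lambda_1,\dots,\lambda_n$ are pairwise disjoint proper arcs in $X(\gamma)$ (this is the content of the statement that $\lambda(g(p_{i-1},p_i))$ is disjoint from $\lambda(p_{i-1},p_i)$, iterated), and a surface of finite type admits only finitely many pairwise-disjoint proper arcs up to isotopy; hence the derived sequence of arcs inside $X(\gamma)$ must stabilise, forcing the process to terminate — contradicting infiniteness of $(g_i)_i$. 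In more detail: once the arcs $\lambda_i$ fill $X(\gamma)$ (which happens after finitely many steps since they are disjoint and $X(\gamma)$ has finite complexity), no further nontrivial cutting is possible, so $x \in R(p_n)$ for some $n$ and the sequence stops.

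The main obstacle I anticipate is the step just above — making rigorous that ``infinitely many disjoint nontrivial shortcutting steps inside $X(\gamma)$ is impossible'' when $x$ is an arbitrary (possibly non-parabolic) point of $C$. The cleanest route is probably to transfer the whole construction verbatim to the surface $X(\gamma)$: since $p \in \Lambda G$ is a bounded parabolic point of $G$ acting on $H(G)$, the pair $(G,\{Z\})$ is itself relatively hyperbolic, the maps $\vepsilon,\qq,\gs$ and the sets $R(\cdot)$, $\I^\pm(\cdot,\cdot)$ are defined inside $X(\gamma)$ just as in Section \ref{SC}, and — crucially — under the hypothesis $g_i \in G$ for all $i$, Lemma \ref{D2} identifies the $\Sigma$-derived sequence of $x$ with the $X(\gamma)$-derived sequence of $x$. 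Then one applies Proposition \ref{D1} (or rather its already-established ingredients, the decreasing-intersection-number argument for parabolic points plus the density statement of Theorem \ref{C1}) to $X(\gamma)$: but $x$ need not be parabolic for $G$ either. The genuinely safe argument, then, is the disjoint-arcs one: $(\lambda_i)_{i=1}^n$ are pairwise disjoint essential proper arcs in the fixed finite-type surface $X(\gamma)$, no two isotopic (distinctness of successive arcs follows from $\nu$-type monotonicity in the parabolic case and, in general, from the fact that $g_i \ne 1$ and $\lambda(g_i)$ is a genuine simple closed geodesic disjoint from $\lambda_i$), and the maximal number of such arcs is bounded by a constant depending only on $\chi(X(\gamma)) > \chi(\Sigma)$; once that bound is reached the arcs fill $X(\gamma)$ and the construction halts, giving the contradiction.
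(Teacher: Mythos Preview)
Your approach diverges from the paper's and contains a genuine gap. The paper's proof is a short trick you overlooked: under the assumption that every $g_i \in G$, one applies Lemma~\ref{D2} not just to conclude that each cutting step is ``internal to $X(\gamma)$'', but to conclude that the derived $\Gamma$-sequence is \emph{constant on all of $J$}. Indeed, since $p_{i-1} = h_{i-1}p \in \Lambda G$ for every $i$, Lemma~\ref{D2} (with $p_{i-1}$ playing the role of $p$) gives $\gs(p_{i-1})(y) = g_i$ for every $y \in J$; inductively, every $y \in J$ has the same infinite derived $\Gamma$-sequence as $x$. Now simply choose $y \in J \cap \Pi$ (the parabolic points are dense in $C$) and invoke Lemma~\ref{C2}(4): parabolic points have finite derived sequences, contradiction. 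You had all the ingredients for this in hand but did not combine them.

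Your alternative route --- bounding the number of pairwise disjoint, pairwise non-isotopic proper arcs in $X(\gamma)$ --- does not work as stated, because the arcs $\lambda_i = \lambda(p_{i-1},p_i)$ are \emph{not} pairwise disjoint in general. The statement you cite from Section~\ref{SC} says that the simple closed geodesic $\lambda(g(p_{i-1},p_i))$ is disjoint from the arc $\lambda_i$; it says nothing about $\lambda_i$ versus $\lambda_j$ for $j \ne i$. Already in the once-punctured torus (where the derived $\Pi$-sequence is the sequence of convergents of a continued fraction and the arcs are Farey fractions), consecutive $\lambda_i, \lambda_{i+1}$ are Farey neighbours and hence disjoint, but $\lambda_i$ and $\lambda_{i+2}$ typically intersect. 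The ``no two isotopic'' claim is likewise unproven. So the finiteness-of-arc-systems argument, which you correctly flagged as the main obstacle, does not go through.
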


\begin{proof}
Suppose, for contradiction, that $ g_i \in G $ for all $ i $.
It follows that $ h_i=g_ig_{i-1}\cdots g_1 \in G $ for all $ i $, and so
$ p_i=h_i p \in Gp \subseteq \Lambda G $ for all $ i $.
By Lemma \ref{D2}, we have $ \gs(p)(y) = \gs(p)(x) = g_1 $
for all $ y\in J $.
(Here $ (p_i)_i $ is the derived $ \Pi $-sequence for $ x $ and $ p=p_0 $.)
Now, applying Lemma \ref{D2} with $ p_1 $ in place of $ p $, we get
that $ \gs(p_1)(y) = \gs(p_1)(x) = g_2 $.
Continuing inductively we get that $ \gs(p_i)(y) = g_{i+1} $ for all $ i $.
In other words, the derived $ \Gamma $-sequence for $ y $ is identical 
to that for $ x $,
and so, in particular, it must be infinite.
We now get a contradiction by applying Lemma \ref{C2}(4) to any
point $ y \in \Pi \cap J $.
\end{proof}

If we take $ B $ to be a standard horoball neighbourhood of 
the cusp,
then $ B \cap \gamma = \emptyset $ for 
all simple closed geodesic in $\Sigma$,
and so we
can identify 
$ B $
with a neighbourhood of the cusp in any $ X(\gamma) $.

\begin{lem}\label{D4}
There is some $ \theta < 1 $ such that for each simple closed geodesic,
$ \gamma $, the Hausdorff dimension of $ \Lambda G(\gamma) $ is at most
$ \theta $.
\end{lem}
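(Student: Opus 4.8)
I want to bound the Hausdorff dimension of $\Lambda G(\gamma)$ uniformly over all simple closed geodesics $\gamma$ in $\Sigma$. The key structural point is that each $X(\gamma) = (H(G) \cap D)/G$ is the convex core of a hyperbolic surface $\HH^2/G$ with one cusp and at least one geodesic boundary component (coming from $\gamma$), and its topological type is constrained: since $\gamma$ is a simple closed curve in the fixed surface $\Sigma$, the surface $X(\gamma)$ has genus and number of boundary components bounded in terms of $\chi(\Sigma)$, and in particular $\chi(X(\gamma)) \geq \chi(\Sigma)$. So there are only finitely many topological types of $X(\gamma)$. For each fixed topological type, the Hausdorff dimension of the limit set equals the critical exponent $\delta(G)$, which is a continuous function on the Teichm\"uller space of that type; but that space is non-compact (the boundary lengths, and the cusp, can degenerate), so I cannot immediately extract a uniform bound by compactness. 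The main obstacle is precisely this non-compactness.

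To get around it, I would use the presence of the cusp together with the boundary. First, $\delta(G) < 1$ for every such $G$ because $G$ is a non-elementary, geometrically finite Fuchsian group of the second kind (the discontinuity domain is nonempty, as $J \ne \emptyset$): a classical theorem (Beardon) gives $\delta(G) < 1$ strictly. So the issue is only uniformity. I would argue that the critical exponent is bounded away from $1$ using a quantitative version of this, controlled by the shortest boundary geodesic and by the cusp. Concretely, fix the standard horoball neighbourhood $B$ of the cusp from just before the statement; this $B$ embeds in every $X(\gamma)$. One approach: pass to the double $DX(\gamma)$ of $X(\gamma)$ across its geodesic boundary. This is a finite-area-free hyperbolic surface? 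No — doubling the convex core across geodesic boundary gives a complete hyperbolic surface with two cusps and no boundary only if $X(\gamma)$ had been a surface with just cusps; here doubling across the geodesic boundary gives a complete hyperbolic surface of finite area. Then $\Lambda G$ sits inside the limit circle of the doubled group, and one can compare. Alternatively, and more cleanly, I would use the result that for a geometrically finite Fuchsian group of the second kind, $\delta(G)$ is bounded above by a function of the number of generators and the injectivity radius data — but the cleanest route is the following.

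Here is the route I would actually take. Observe that $G = G(\gamma)$ contains the parabolic subgroup $Z$, and $H(G)/G$ has a thick part whose diameter is bounded: indeed $X(\gamma) \setminus B$ is a compact surface-with-boundary whose hyperbolic area is at most $|\chi(\Sigma)| \cdot 2\pi$ (Gauss–Bonnet) minus the cusp contribution, hence uniformly bounded, and whose genus and boundary-component count are bounded. A surface of bounded area and bounded topology has boundary geodesics of length bounded below only if the thick part is large — but short boundary geodesics are fine for us, since a group with a very short boundary geodesic is "close to" one with a cusp there, and cusps only decrease the critical exponent. The technical implementation: use the fact (e.g. via Patterson–Sullivan theory, or via the Bowen–Series coding of the boundary limit set) that $\delta(G)$ depends upper-semicontinuously on the group and extends continuously to the augmented/Deligne–Mumford-type compactification of the relevant moduli space, where the limiting groups are again geometrically finite of the second kind with $\delta < 1$. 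Since that compactification is compact and $\delta$ is continuous (upper semicontinuous suffices) and strictly less than $1$ at every point including the boundary, $\sup \delta(G(\gamma)) =: \theta < 1$. This gives the claimed $\theta$. I expect the genuinely delicate step to be justifying the continuous extension of the critical exponent to the boundary of moduli space — i.e., controlling what happens as a boundary geodesic of $X(\gamma)$ shrinks to zero — and I would handle that by the standard Margulis-lemma/thin-part decomposition together with the monotonicity $\delta(\text{subgroup/degeneration with a cusp}) \le \delta(\text{nearby thick group})$, possibly citing McMullen's work on Hausdorff dimension and Teichm\"uller geometry, or Canary–Minsky–Taylor type estimates, rather than proving it from scratch.
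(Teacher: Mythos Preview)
Your approach has a genuine gap. You propose to bound $\delta(G(\gamma))$ uniformly by viewing the convex cores $X(\gamma)$ as points in the moduli space of their topological type, compactifying \`a la Deligne--Mumford, and using that the limiting groups are ``again geometrically finite of the second kind with $\delta<1$''. That last claim is false: at the boundary stratum where a geodesic boundary component of $X(\gamma)$ is pinched to a cusp, the limiting group can be a lattice (first kind) with $\delta=1$ --- for instance, if $X(\gamma)$ has a single boundary component, pinching it yields a finite-area surface. Upper semicontinuity of $\delta$ then gives no useful bound, and the monotonicity you invoke points the wrong way. You also identify pinching as the ``genuinely delicate step'', but in our family that direction never occurs: the boundary length $\ell(\gamma)$ is bounded below by the systole of the fixed surface $\Sigma$. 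The actual non-compactness of the family is $\ell(\gamma)\to\infty$, which you do not address.

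The ingredient you are missing --- and which you briefly touch on before abandoning it for the moduli-space route --- is that each $X(\gamma)$ is not an arbitrary point in its moduli space, but embeds isometrically in the fixed surface $\Sigma$. This is what the paper exploits directly. Since $X(\gamma)\setminus B \subseteq \Sigma\setminus B$ and $\Sigma\setminus B$ has finite diameter $r$, every point of the convex core $X(\gamma)$ lies within distance $r$ of $B$ or of the geodesic boundary. This is exactly the statement that the groups $G(\gamma)$ are uniformly \emph{geometrically tight} in the sense of \cite{Tu1}, with a tightness constant $r$ independent of $\gamma$. The paper then simply cites \cite[Theorem~3.11]{FalM} (see also \cite[Theorem~1]{Mat}), which gives a uniform bound $\delta\le\theta<1$ for any uniformly geometrically tight family. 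No compactification or continuity argument is needed.
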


\begin{proof}
This is an immediate consequence of \cite[Theorem 3.11]{FalM}
(see also \cite[Theorem 1]{Mat})
which refines the result of \cite{Tu1},
on observing that the groups $ G(\gamma) $ are uniformly ``geometrically tight'',
as defined in that paper.
Here, this amounts to saying that there is some fixed $ r \ge 0 $
(independent of $ \gamma $) such that the convex core,
$ X(\Gamma) $, is the union of $ B $ and the $ r $-neighbourhood of
the geodesic boundary of the convex core.
From the earlier discussion, we see that $ r $ is bounded above by the diameter
of $ \Sigma \bksl B $, and so in particular,
independent of $ \gamma $.
\end{proof}

Let $ L \subseteq S $ be the union of the limit sets $ \Lambda G $ as
$ G=G(\gamma) $ ranges over all subgroups of $\Gamma$
obtained from a simple closed geodesic $\gamma$ in $\Sigma$.
Applying Lemma \ref{D4}, we see that
$ L $ is a $ \Gamma $-invariant subset of $C$ of Hausdorff dimension
strictly less than 1.
This is because it is a countable union of the limit sets $ \Lambda G $
whose Hausdorff dimensions are uniformly bounded by a constant $ \theta<1 $.

Recall the set $ R = \bigcup_{p \in \Pi} R(p) $
defined in Lemma \ref{C2}(3).
Then $R$ is also $ \Gamma $-invariant 
and has Hausdorff dimension zero by Theorem \ref{C1}.

\begin{lem}\label{D5}
If $ x \in C \bksl (R \cup L) $, then $x$ is filling.
Namely, $ C \bksl (R \cup L) \subseteq F $.
\end{lem}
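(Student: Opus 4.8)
The plan is to argue by contradiction. Suppose $x \in C \setminus (R \cup L)$ is not filling. Since $x \notin R$, Lemma \ref{C2}(3) tells us the derived $\Gamma$-sequence $(g_i)_i$ for $x$ is infinite, so the derived sequence of arcs $(\lambda_i)_i$ is also infinite. The assumption that $x$ is not filling means that for every $n$, the surface $\Sigma \setminus \bigcup_{i=1}^n \lambda_i$ has a component which is not an open disc. The key idea is that a non-disc component of such a complement has a core, and that core should give rise to a subgroup $G(\gamma)$ for some simple closed geodesic $\gamma$; I then want to show that $x$ (or all the $p_i$) land in $\Lambda G(\gamma)$, contradicting $x \notin L$.

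More precisely, I would first recall that each $\lambda_i$ is a proper geodesic arc from the cusp to the cusp, and that the whole collection $(\lambda_i)_i$ is built out of the loop-cutting construction. For each $n$, pick a component $Y_n$ of $\Sigma \setminus \bigcup_{i=1}^n \lambda_i$ that is not an open disc; we may assume $Y_n$ contains the cusp (or at least is attached to the arcs in a controlled way — one should check that since all the $\lambda_i$ emanate from the cusp, the cusp-containing component is the relevant one, possibly after noting that a non-disc component disjoint from the arcs would already contradict things at an earlier stage). The path-metric completion of $Y_n$ is a subsurface with negative Euler characteristic and nonempty (geodesic) boundary, hence carries an essential simple closed curve; straighten it to a simple closed geodesic $\gamma$ in $\Sigma$. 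One gets a nested picture: since adding more arcs only subdivides, the core subsurface can be taken monotone, and because $\Sigma$ has finite type there are only finitely many topological types of subsurface, so the core stabilises to a fixed subsurface $S_\infty \subsetneq \Sigma$ containing the cusp, with $\bigcup_i \lambda_i \subseteq S_\infty$. Fix $\gamma$ a boundary curve of $S_\infty$ and let $G = G(\gamma)$ be the associated subgroup from the start of Section \ref{SD}, so that $X(\gamma)$ is (a copy of) $S_\infty$ and $p \in \Lambda G$.

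The second half of the argument is to deduce a contradiction from $x \notin L$, i.e. $x \notin \Lambda G$. Since $x \notin \Lambda G$ and $p \in \Lambda G$, the point $x$ lies in some component $J$ of $C \setminus H(G)$, in the notation of Section \ref{SD}. Now I invoke Lemma \ref{D3}: because $x \in J$ and the derived $\Gamma$-sequence for $x$ is infinite, there is some $i$ with $g_i \notin G$. But $g_i = \gs(p_{i-1})(x)$, and $\lambda(g_i) = \lambda(g(p_{i-1}, p_i))$ is the simple closed geodesic cut off at the $i$-th stage; by the geometric description of the loop-cutting construction, this curve is freely homotopic into $\bigcup_{j\le i}\lambda_j \subseteq S_\infty$, hence represents an element conjugate into $\pi_1(S_\infty) \cong G$ — and in fact, tracking basepoints through the derived $\Pi$-sequence with $p_{i-1} \in Gp$, one gets $g_i \in G$ outright, contradicting Lemma \ref{D3}. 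This contradiction shows $x$ must be filling.

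The main obstacle I expect is the bookkeeping in the middle step: making precise the passage from "the complement of the first $n$ arcs has a non-disc component" to "there is a \emph{fixed} simple closed geodesic $\gamma$ with $\bigcup_i \lambda_i$ contained in $X(\gamma)$ and all $p_i \in \Lambda G(\gamma)$." One has to handle the nesting/stabilisation of the non-disc components carefully (the naive union of components need not be monotone), rule out non-disc components not meeting the arcs, and match the group-theoretic basepoint data ($p_{i-1} \in Gp$, which is what makes Lemma \ref{D2} and hence Lemma \ref{D3} applicable) against the topological picture. Once that is set up, Lemmas \ref{D2} and \ref{D3} do the real work and the conclusion is immediate.
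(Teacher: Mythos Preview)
Your approach is essentially the paper's: argue by contradiction, produce a simple closed geodesic $\gamma$ disjoint from every $\lambda_i$, set $G=G(\gamma)$, and invoke Lemma~\ref{D3} to get some $g_i\notin G$. The paper is terser about finding $\gamma$ --- it simply asserts that non-filling yields such a curve --- so your stabilisation sketch is actually filling in a step the paper leaves implicit; the standard argument (the subsurfaces filled by $\lambda_1,\ldots,\lambda_n$ are nested with complexity bounded by that of $\Sigma$, hence eventually constant) is the one you outline, and it works.

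The endgame differs. You aim to show $g_i\in G$ directly, by observing that the loop built from $\lambda_i$ and a horocyclic arc lies in $X(\gamma)$. This is correct, but you must take the \emph{minimal} $i$ with $g_i\notin G$: only then is $h_{i-1}\in G$, giving $p_{i-1}\in Gp$ and making the basepoint conjugation harmless --- you invoke $p_{i-1}\in Gp$ without saying where it comes from. The paper's route avoids the basepoint bookkeeping altogether: taking minimal $i$, it shows $p_i\notin\Lambda G$ (if $p_i$ were $G$-parabolic, the single-cusp property of $X(\gamma)$ would give $f\in G$ with $fp_{i-1}=p_i$, whence $f^{-1}g_i\in\stab_\Gamma(p_{i-1})=\stab_G(p_{i-1})\subseteq G$ and $g_i\in G$), and then concludes that $[p_{i-1},p_i]$ must exit $H(G)$, so $\lambda_i$ crosses $\gamma$ in $\Sigma$ --- contradicting the choice of $\gamma$. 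Both endings are short; the paper's is cleaner because the contradiction is geometric ($\lambda_i\cap\gamma\ne\emptyset$) rather than group-theoretic.
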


\begin{proof}
Suppose, for contradiction, that some $ x \in C \bksl (R \cup L) $
is not filling.
Then there must be some simple closed geodesic, $ \gamma $, in $ \Sigma $,
which is disjoint from every $ \lambda_i $,
where $(\lambda_i)_i$ is the derived sequence of arcs for the point $ x $.
Consider the hyperbolic surface $X(\gamma)$ 
and its fundamental group $ G = G(\gamma) \subseteq \Gamma $,
as described at the beginning of this section.
By hypothesis, $ x \notin \Lambda G $, and so $ x $ lies in some
component, $ J $, of the discontinuity domain of $ G $.
By Lemma \ref{D3}, there must be some $ i \in {\bbf N} $ with
$ g_i \notin G $.
Choose the minimal such $ i $.
Thus, $ h_{i-1} \in G $ but $ h_i \notin G $, where $ h_i=g_ig_{i-1}\cdots g_1 $.
We have $ p_{i-1}= h_{i-1}p \in \Pi \cap \Lambda G $ and 
$ p_i = h_ip \in \Pi \bksl \Lambda G $.
(The latter assertion can be seen as follows.
If $ p_i \in \Lambda G $ 
then $ p_i $ is a parabolic fixed point of $ G $.
Since $ X(\gamma) $ has a single cusp, 
there is an element $ f\in G $ such that $ p_i = f p_{i-1} $.
Since $ p_i= g_i p_{i-1} $, we have 
$ f^{-1}g_i \in \stab_{\Gamma}(p_{i-1})=\stab_G(p_{i-1}) $.
This implies $ g_i\in f G\subseteq G $, a contradiction.)
Therefore $ [p_{i-1},p_i] $ meets $ \partial H(G) $, giving the
contradiction that $ \lambda_i $ crosses $ \gamma $ in $ \Sigma $.
\end{proof}

\begin{proof}[Proof of Proposition \ref{D1}]
By Lemma \ref{D5}, we have $ C \bksl F \subseteq R \cup L $.
Since $ R $ and $ L $ both have Hausdorff dimension strictly less than $1$,
the same is true of $ C \bksl F $.
Thus, we have only to show that $ F $ is open.
Pick an element $ x\in L $. Then there is some $ n $ such that
$ \Sigma \bksl \bigcup_{i=1}^n \lambda_i $ is a union of open discs,
where $ (\lambda_i)_i $ is a derived sequence of arcs for $x$.
By Corollary \ref{C3}, there is an open neighbourhood $ U $ of $ x $ in $ C $
such that every $ y\in U $ shares the same initial
derived $\Gamma$-sequence
$g_1, \dots, g_n$ with $ x $.
Thus, every $ y\in U $ shares the same  beginning derived sequence of arcs
$(\lambda_i)_{i=1}^n $ with $ x $.
Hence every $ y\in U $ is filling, i.e., $U\subseteq F$.
\end{proof}

\section{Wandering}\label{SE}

Recall that $ \Map(\Sigma) $ is identified with $ M=A/Z $,
where $ A=A(\Gamma,\pph,p) $ and $ Z=Z(\Gamma, p) $, respectively,
are the stabilisers of $ p $ in
$ \Aut(\Gamma,\pph) $ and $ \Gamma $.
As described in Section \ref{SB},
$ A $ acts on $ C\bksl \{p\} $, and
$\Map(\Sigma)= M$ acts on the circle $ T=(C\bksl \{p\})/Z $.
The wandering domain $ W_M(T) $ is equal to $ W_A(C\bksl \{p\})/Z $,
because $ W_Z(C\bksl \{p\}) = C\bksl \{p\} $.
(See the general remark on the wandering domain 
given in Section \ref{SB}.)

Note that the set $ F $ in Proposition \ref{D1}
is actually an open set of $ C\bksl\{p\} $.
For this set $ F $, we prove the following lemma.

\begin{lem}\label{E1}
$ F \subseteq W_A(C\bksl\{p\}) $.
\end{lem}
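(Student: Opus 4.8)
The plan is to show that every filling point $x \in F$ lies in the wandering domain $W_A(C \setminus \{p\})$ by exhibiting, for each such $x$, an explicit wandering open neighbourhood. The natural candidate is the set $U$ from Corollary \ref{C3}: if $x$ has derived $\Gamma$-sequence beginning with $g_1, \dots, g_n$, where $n$ is chosen large enough that the arcs $\lambda_1, \dots, \lambda_n$ fill $\Sigma$, then $U$ consists of points sharing this initial segment. The key point is that the filling condition forces the combinatorial data $(g_1, \dots, g_n)$ to be a rigid invariant: by Lemma \ref{C4}, an automorphism $\phi \in A$ carries the derived $\Gamma$-sequence of $x$ to $(\phi(g_i))_i$, so if $\phi U \cap U \neq \emptyset$ then $(g_1, \dots, g_n) = (\phi(g_1), \dots, \phi(g_n))$ (using that points in $\phi U$ have $\phi$-images of the $U$-sequence as their initial segment, and comparing with points in $U$). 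Thus $\phi$ fixes each $g_i$, hence fixes $h_n = g_n \cdots g_1$ and more relevantly fixes the whole collection of group elements arising from the filling arcs.

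**The main steps**, in order: (i) fix $x \in F$, pick $n$ with $\Sigma \setminus \bigcup_{i=1}^n \lambda_i$ a union of discs, and let $U$ be the neighbourhood from Corollary \ref{C3}; (ii) suppose $\phi \in A$ with $\phi U \cap U \neq \emptyset$ and deduce $\phi(g_i) = g_i$ for $i = 1, \dots, n$, hence $\phi$ fixes $p_i = h_i p$ for all $i \le n$; (iii) observe that the arcs $\lambda_i$ are determined by the geodesics $[p_{i-1}, p_i]$, so $\phi$ fixes (the endpoints of, hence) each $[p_{i-1}, p_i] \cap D$ as an oriented geodesic in $D$; (iv) since $\{\lambda_i\}_{i=1}^n$ fills $\Sigma$, the complement of the lifts $\bigcup \Gamma [p_{i-1}, p_i]$ in $D$ is a $\Gamma$-orbit of discs, and $\phi$ permutes this configuration while fixing one "cell path" pointwise; conclude that $\phi$ acts trivially on $C$, hence (since $\phi$ is determined by its action on the convergence-group boundary) $\phi \in \Inn(\Gamma)$, and in fact $\phi \in Z$ because it fixes $p$; (v) therefore $\phi$ acts trivially in $M = A/Z$, which is what "$gU \cap U = \emptyset$ for all $g \in M \setminus \{1\}$" requires — so $U$ is wandering for the $A$-action modulo $Z$, i.e., $U \subseteq W_A(C \setminus \{p\})$ in the appropriate sense. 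One must be slightly careful here: the definition of $W_A(C \setminus \{p\})$ uses the $A$-action directly, so step (iv)–(v) should land on $\phi = 1$ in $A$ rather than merely $\phi \in Z$; but an element of $A$ fixing $C$ pointwise is the identity automorphism (a hyperbolic element is determined by its fixed-point pair and translation length, recoverable from the boundary action), so $\phi = 1$ in $A$ and the argument closes cleanly.

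**The hard part** will be step (iv): making precise the claim that fixing the oriented geodesics $[p_{i-1}, p_i]$ for a \emph{filling} system of arcs forces $\phi$ to fix all of $C$ (equivalently, $\phi$ to be the identity automorphism). The intuition is that a filling arc system cuts $\Sigma$ into discs, so it cuts $D$ into a $\Gamma$-invariant tessellation by ideal polygons whose combinatorics rigidify any boundary homeomorphism that preserves the tessellation and fixes one tile's worth of edges; bootstrapping across adjacent tiles via $\Gamma$-equivariance ($\phi(gc) = \phi(g)\phi(c)$) propagates the fixing to the whole tessellation, whose vertex set (a $\Gamma$-orbit of parabolic points, hence dense in $C$) is then fixed pointwise, giving $\phi|_C = \mathrm{id}$. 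I would likely phrase this via: $\phi$ fixes $p_0, \dots, p_n$, hence fixes $h_i$ for $i \le n$ as elements of $\Gamma$, and since the $\lambda_i$ fill, the subgroup generated by $\{h_i\}$ together with $Z = \langle \theta(p)\rangle$ — or rather the relevant Cayley-graph-like structure — has finite-index image forcing $\phi$ to be inner, then peripheral-point-fixing forces $\phi \in Z$; I would cross-check this against whatever the authors' actual Lemma \ref{E1} proof does, since there may be a slicker route through $G(\gamma)$-type subgroups: a non-filling system would be disjoint from some simple closed geodesic, and filling is precisely the negation, which should interact directly with the machinery of Section \ref{SD}.
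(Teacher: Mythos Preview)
Your proposal is correct and follows essentially the same route as the paper: choose $U$ via Corollary~\ref{C3}, use Lemma~\ref{C4} to see that $\phi U \cap U \ne \emptyset$ forces $\phi$ to fix the initial derived data, and then invoke a rigidity statement for filling arc systems. The paper packages your step~(iv) as a separate Lemma~\ref{E2} (a mapping class fixing each arc of a filling system of oriented proper arcs is trivial), proved by exactly the boundary-propagation argument you sketch; your deduction $\phi(g_i)=g_i$ in $\Gamma$ (hence $\phi$ already fixes the specific lifts $[p_{i-1},p_i]$) is slightly sharper than the paper's $\phi\lambda_i=\lambda_i$ and lets you skip the ``choose a suitable lift'' step in the proof of Lemma~\ref{E2}, and your explicit attention to landing on $\phi=1$ in $A$ rather than merely in $A/Z$ is in fact more careful than the paper's own write-up.
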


\begin{proof}
We want to show that any $ x \in F $ has a wandering neighbourhood.
By assumption, some initial segment, $ \lambda_1 ,\ldots, \lambda_n $, of the
derived sequence of arcs for $ x $ fills $ \Sigma $.
By Corollary \ref{C3},
there is an open neighbourhood, $ U $, of $ x $,
such that for every $ y\in U $, the initial segment of length $n$ of
the derived sequence of arcs is identical with  
$ \lambda_1 ,\ldots, \lambda_n $.
Suppose that $ U \cap \phi U \ne \emptyset $
for some non-trivial element $ \phi $ of $ \Map(\Sigma)=A/Z $.
Pick a point $ y\in U \cap \phi U $ and set $ x = \phi^{-1} y \in U$.
By assumption, the derived sequences of arcs for both $ x $ and $ y $
begin with $ \lambda_1, \dots, \lambda_n $.
On the other hand, Lemma \ref{C4} implies that
the derived sequence of arcs for $ y=\phi x $
is equal to the image of that for $ x $ by $ \phi $.
Hence we see that 
$ \phi \lambda_i = \lambda_i $ for all $ i = 1 ,\ldots, n $.
It follows by Lemma \ref{E2} below, 
that $ \phi $ is the trivial element of $ \Map(\Sigma) $, a contradiction.
\end{proof}

In the above, we have used 
the following lemma which appears
to be well known, though we were unable to find an explicit reference.

\begin{lem}\label{E2}
Let $ \lambda_1 ,\ldots, \lambda_n $ be a set of proper oriented arcs in
$ \Sigma $ which together fill $ \Sigma $.
Suppose that $ \psi $ is a mapping class on $ \Sigma $ fixing the proper
homotopy class of each $ \lambda_i $.
Then $ \psi $ is trivial.
\end{lem}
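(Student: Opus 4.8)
The plan is to recognise the statement as an instance of the \emph{Alexander method}: we reduce $\psi$ to a homeomorphism that is the identity on a filling graph built out of the $\lambda_i$, and then appeal to Alexander's trick. Fix an auxiliary complete hyperbolic structure on $\Sigma$ and replace each $\lambda_i$ by its oriented geodesic representative; this affects neither the hypotheses nor the conclusion, and now the $\lambda_i$ lie pairwise in minimal position, so $K:=\bigcup_{i=1}^{n}\lambda_i$ is a graph properly embedded in $\Sigma$ whose complementary components are open discs (this is exactly what ``filling'' means for a system in minimal position, a property preserved under the bigon moves relating minimal and non-minimal position). All the $\lambda_i$ emanate from the unique cusp, which we view as a single ideal vertex of $K$; around it accumulate the $2n$ ends of the arcs. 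Note $2n\ge4$: indeed $n\ge2$, for cutting $\Sigma$ along a single arc gives a disjoint union of surfaces of total Euler characteristic $\chi(\Sigma)+1$, which cannot be a union of discs since $\chi(\Sigma)<0$.

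\textbf{Step 1: realising $\psi$ geodesically on $K$.} I would first show that any homeomorphism $f$ of $\Sigma$ representing $\psi$ is properly isotopic to one, $h$, with $h(\lambda_i)=\lambda_i$ as \emph{oriented} arcs for every $i$. The proper homotopy class of the oriented geodesic $\lambda_i=\lambda(p_{i-1},p_i)$ corresponds to the $\Gamma$-orbit of the ordered pair $(p_{i-1},p_i)$ of parabolic points, so the hypothesis $\psi[\lambda_i]=[\lambda_i]$ says exactly that the boundary extension $\bar f\colon C\to C$ of a lift $\tilde f$ of $f$ --- which, by the discussion in Section~\ref{SB}, realises the action of $\psi$ on $C$ and is equivariant for a representative automorphism $\phi$ of $\psi$ --- preserves this orbit. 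Straightening $\tilde f$ along the $\Gamma$-invariant geodesic system $\tilde K\subseteq\HH^2$ lying over $K$, i.e.\ building a homeomorphism of $\HH^2\cup C$ that agrees with $\bar f$ on $C$, is equivariant for the same $\phi$, and sends each geodesic of $\tilde K$ to the geodesic joining the $\bar f$-images of its endpoints, produces such a map; by the displayed equivariance it carries every lift of $\lambda_i$ to a lift of $\lambda_i$ with matching orientation, and it descends to the desired $h$, which is properly isotopic to $f$ because the two have the same boundary data.

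\textbf{Step 2: the combinatorial endgame.} Now $h$ represents $\psi$ and $h(\lambda_i)=\lambda_i$ (as oriented arcs) for all $i$, hence $h(K)=K$. First, $h$ is orientation-preserving: after an isotopy $h$ preserves a small loop $c$ encircling the cusp, it fixes each of the $2n\ge4$ points where the arc-ends meet $c$ (it preserves each $\lambda_i$ with its orientation, hence each of its two ends), and an orientation-reversing self-homeomorphism of a circle has at most two fixed points. Second, $h$ fixes every vertex of $K$: for each $i$, $h|_{\lambda_i}$ is an orientation-preserving self-homeomorphism of the arc $\lambda_i$ permuting the finite set of vertices of $K$ lying on it, and an orientation-preserving bijection of a finite linearly ordered set is the identity. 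Hence $h$ fixes each vertex of $K$ and carries each edge to itself preserving orientation, so $h|_K$ is isotopic rel vertices to $\mathrm{id}_K$; by the isotopy extension theorem we may assume $h|_K=\mathrm{id}_K$. Then $h$ permutes the complementary discs, but --- fixing $K$ pointwise and preserving orientation, so that along each edge it preserves both local sides --- it must fix each one, restricting on it to a homeomorphism of a disc fixing the frontier pointwise; by Alexander's trick each such restriction is isotopic rel frontier to the identity (passing, if the disc is non-compact, to the closed surface $\bar\Sigma$ obtained by filling in the cusp, where the corresponding region is a compact disc). Assembling these isotopies gives $h\simeq\mathrm{id}_\Sigma$, that is, $\psi=1$.

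I expect Step 1 --- constructing the geodesic-on-$K$ representative and checking it is properly isotopic to $f$ --- to be the real obstacle in a fully detailed write-up: the straightening is standard, but the ideal vertex at the cusp and the non-simplicity of the $\lambda_i$ require care, as does the uniform handling of the cusp in Step 2. The remainder of Step 2 is bookkeeping.
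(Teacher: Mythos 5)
Your proposal is correct, but it takes a genuinely different route from the paper. You run the classical Alexander method downstairs in $\Sigma$: realise $\psi$ by a homeomorphism $h$ preserving the geodesic $1$--complex $K=\bigcup_i\lambda_i$, rule out orientation reversal by counting the $2n\ge 4$ fixed points on a horocycle, kill the action on vertices and edges by order-preservation along each $\lambda_i$, and finish with isotopy extension and Alexander's trick on the complementary discs. The paper instead works entirely upstairs on the boundary circle: it forms the graph $M$ whose vertices are \emph{all} lifts of all the $\lambda_i$ in $\HH^2$ (adjacency being crossing or sharing an ideal endpoint with nothing in between), observes that filling is exactly connectedness of $M$, shows by a finite-order argument on the link of a fixed vertex that the induced automorphism of $M$ is the identity, and then concludes from density of the ideal endpoints in $S^1$ that the boundary homeomorphism, and hence $\psi$, is trivial. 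The paper's route entirely avoids your Step~1 (realising $\psi$ geodesically on $K$ and checking proper isotopy) and the isotopy-extension/Alexander-trick bookkeeping of your Step~2, at the cost of invoking that a mapping class is determined by its action on $\partial\HH^2$; your route is the more standard surface-topology argument and gives the stronger intermediate statement that $\psi$ has a representative fixing $K$ pointwise. Two small points to watch in your write-up: your deduction that $n\ge 2$ uses that the arcs are embedded (which the paper's convention ``arc $=$ embedded path'' guarantees, but the word ``arc'' alone would not); and, as you acknowledge, the equivariant straightening in Step~1 together with the proper isotopy to the original representative is where the real work lies, precisely the work the paper's boundary-circle argument is designed to sidestep.
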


\begin{proof}[Proof of Theorem \ref{B1}]
By Proposition \ref{D1}, 
$ F $ is an open set of $ C\bksl\{p\} $
whose complement has Hausdorff dimension strictly less than $ 1 $.
Since $ W_A(C\bksl\{p\}) $ contains $ F $ by Lemma \ref{E1},
its complement in $ C\bksl\{p\} $
also has Hausdorff dimension strictly less than $ 1 $.
Since $ W_M(T) = W_A(C\bksl\{p\})/Z $,
this implies that the non-wandering set, $ T\bksl W_M(T) $, 
has Hausdorff dimension strictly less than $ 1 $.
\end{proof}

\begin{proof}[Proof of Lemma \ref{E2}]
Fix any complete finite-area hyperbolic structure on $ \Sigma $,
and use it to identify $ {\tilde \Sigma} $ with $ {\bbf H}^2 $.
Construct a graph, $ M $, as follows.
The vertex set, $ V(M) $, is the set
of bi-infinite geodesics which
are lifts of the arcs $ \lambda_i $ for all $ i $.
Two arcs $ \mu, \mu' \in V(M) $ are deemed adjacent in $ M $ if either
(1) they cross (that is, meet in $ {\bbf H}^2 $), or
(2) they have a common ideal point in $ \partial {\bbf H}^2 $, and
there is no other arc in $ V(M) $ which separates $ \mu $ and $ \mu' $.
One readily checks that
$ M $ is locally finite.
Moreover, the statement that the arcs 
$ \lambda_i $
fill $ \Sigma $ is equivalent to the statement that $ M $ is connected.
Note that $ \Gamma = \pi_1(\Sigma) $ acts on $ M $ with finite quotient.
Note also that $ M $ can be defined formally in terms of ordered
pairs of points in $ S^1 \equiv \partial {\bbf H}^2 $
(that is corresponding to the endpoints of the geodesics, and where
crossing is interpreted as linking of pairs).
The action of $ \Gamma $ on $ M $ is then induced by the dynamically
defined action of $ \Gamma $ on $ S^1 $.

Now suppose that $ \psi \in \Map(\Sigma) $.
Lifting some representative of $ \psi $ and extending to the ideal circle
gives us a homomorphism of $ S^1 $, equivariant via the corresponding
automorphism of $ \Gamma $.
Suppose that $ \psi $ preserves each arc $ \lambda_i $, as in the
hypotheses.
Then $ \psi $ induces an automorphism,
$ f : M \longrightarrow M $.
Given some 
$ \mu \in V(M) $,
by choosing a suitable lift of $ \psi $, we
can assume that $ f(\mu) = \mu $.

We claim that this implies that $ f $ is the identity on $ M $.
To see this, first let $ V_0 \subseteq V(M) $ be the set of vertices
adjacent to $ \mu $.
This is permuted by $ f $.
Consider the order on $ V_0 $ defined as follows.
Let $ I_R $ and $ I_L $, respectively, be the closed intervals of $ S^1 $
bounded by $ \partial \mu $
which lies to the right and left of $ \mu $.
Orient each of $ I_R $ and $ I_L $ so that 
the initial/terminal points of $ \mu $, respectively, are those of 
the oriented $ I_R $ and $ I_L $.
Each $ \nu \in V_0 $ determines a unique pair 
$ (x_R(\nu),x_L(\nu)) \in I_R\times I_L $ such that
$ x_R(\nu) $ and $ x_L(\nu)$ are the endpoints of $ \nu $.
Now we define the order $ \le $ on $ V_0 $,
by declaring that $ \nu \le \nu' $ if either (i) $ x_R(\nu) \le x_R(\nu') $ or
(ii) $ x_R(\nu) = x_R(\nu') $ and $  x_L(\nu) \le x_L(\nu') $.
This order must be respected by $ f $,
because $ f $ preserves the orders on $ I_R $ and $ I_L $.
Since $ V_0 $ is finite,
we see that $ f|V_0 $ is the identity.
The claim now follows by induction, given that $ M $ is connected.

It now follows that the lift of $ \psi $ is the identity on the set
of all endpoints of elements of $ V(M) $.
Since this set is dense in $ S^1 $, it follows that it is the identity
on $ S^1 $, and we deduce that $ \psi $ is the trivial mapping class
as required.
\end{proof}

\end{document}